\begin{document}

\title*{Balanced truncation model reduction for \\3D linear magneto-quasistatic field problems }
\author{Johanna Kerler-Back and Tatjana Stykel}
\institute{Johanna Kerler-Back \at 
	Institut f\"ur Mathematik, Universit\"at Augsburg, Universit\"atsstra{\ss}e 14, 86159 Augsburg, Germany \email{kerler@math.uni-augsburg.de}
\and Tatjana Stykel \at 
    Institut f\"ur Mathematik, Universit\"at Augsburg, Universit\"atsstra{\ss}e 14, 86159 Augsburg, Germany \email{stykel@math.uni-augsburg.de}}
%
%
\maketitle

\abstract*{We consider linear magneto-quasistatic field equations which arise in simulation 
	of low-frequency electromagnetic devices coupled to electrical circuits.
	A finite element discretization of such equations on 3D domains leads to a singular system 
	of differential-algebraic equations. First, we study the structural properties of such a system 
	and present a new regularization approach based on projecting out the singular state components. 
	Furthermore, we present a Lyapunov-based balanced truncation model reduction method which preserves stability 
	and passivity. By making use of the underlying structure of the problem, we develop an efficient model reduction algorithm. 
	Numerical experiments demonstrate its performance on a test example.}

\abstract{We consider linear magneto-quasistatic field equations which arise in simulation 
	of low-frequency electromagnetic devices coupled to electrical circuits.
	A~finite element discretization of such equations on 3D domains leads to a~singular system 
	of differential-algebraic equations. First, we study the structural properties of such a~system 
	and present a~new regularization approach based on projecting out the singular state components. 
	Furthermore, we present a~Lyapunov-based balanced truncation model reduction method which preserves stability 
	and passivity. By ma\-king use of the underlying structure of the problem, we develop an efficient model reduction algorithm. 
	Numerical experiments demonstrate its performance on a~test example.}

\section{Introduction}
Nowadays, integrated circuits play an~increasingly important role. Modelling of elec\-tro\-mag\-ne\-tic effects
in high-frequency and high-speed electronic systems leads to coupled field-circuit models of high complexity.
The development of efficient, fast and accurate simulation tools for such models is of great importance
in the computer-aided design of electromagnetic structures offering significant savings in production cost and time.

In this paper, we consider model order reduction of linear magneto-quasistatic (MQS) systems obtained from Maxwell's equations by assuming that the contribution of displacement current is negligible compared to the conductive currents. Such systems are commonly used for modeling of low-frequency electromagnetic devices like transformers, induction sensors and generators. 
Due to the presence of non-conducting subdomains, MQS models take form of partial differential-algebraic equations whose dynamics are restricted to a manifold described by algebraic constraints. 
A spatial discretization of MQS systems using the finite integration technique (FIT) \cite{Wei77} or the finite element method (FEM) \cite{Boss1998,Monk03,Nede1980} leads to 
differential-algebraic equations (DAEs) which are singular in the 3D case. The structural analysis and numerical treatment of singular DAEs is facing serious challenges due to the fact that the inhomogeneity has to satisfy some restricted conditions
to guarantee the existence of solutions and/or that the solution space is infinite-dimensional. 
To overcome these difficulties, different regularization techniques have been developed for MQS systems \cite{Boss2001, CleSchpsGerBar2011, CleWei2002, Hipt2000}. Here, we propose a~new regularization approach which is based on a~special state space transformation and withdrawal of overdetermined state components and redundant equations. 

Furthermore, we exploit the special block structure of the regularized MQS system to 
determine the deflating subspaces of the underlying matrix pencil corresponding to zero and infinite eigenvalues. This makes it possible to extend the balanced truncation model reduction method to 
3D MQS problems. Similarly to \cite{KS17, ReiSty2009}, our approach relies on projected Lyapunov equations and preserves passivity in a~reduced-order model. It should be noted that the balanced truncation method presented in \cite{KS17} for 2D and 3D gauging-regularized MQS systems cannot be applied to the regularized 
system obtained here, since it is stable, but not asymptotically stable. To get rid of this problem, 
we proceed as in \cite{ReiSty2009} and project out state components corresponding not only to the eigenvalue at infinity, but also to zero eigenvalues. Our method is based on computing certain subspaces of incidence matrices related to the FEM discretization which can be determined by using efficient graph-theoretic algorithms as developed in \cite{Ipac2013}. 

\section{Model Problem}

We consider a system of MQS equations in vector potential formulation given by
\begin{equation}
\arraycolsep=2pt
\begin{array}{rclrl}
\displaystyle{\sigma\frac{\partial \mathbf{A}}{\partial t} + \nabla \times \nu\,
\nabla \times \mathbf{A}} & = & \chi\,  \iota 
& \quad \mbox{in}& \Omega\times (0,T), \\
\mathbf{A}\times n_o & = & 0 & \quad \mbox{on}& \partial \Omega\times (0,T), \\[1mm]
\mathbf{A}(\,\cdot\,, 0)& = & \mathbf{A}_0 & \quad \mbox{in}& \Omega, \\
\displaystyle{\int_\Omega \chi^T\frac{\partial \mathbf{A} }{\partial t}\, {\rm d}\xi + R\, \iota} & = & u & \quad \mbox{in} & (0,T), 
\end{array}
\label{eq:MQSmodel}
\end{equation}
where $\mathbf{A}:\Omega\times (0,T)\to\mathbb{R}^3$ is the magnetic vector potential,
 $\chi:\Omega\to\mathbb{R}^{3\times\, m}$ is a~divergence-free winding function, 
 $\iota:(0,T)\to\mathbb{R}^m$ and $u:(0,T)\to\mathbb{R}^m$ are the electrical current and voltage through the stranded conductors with $m$ terminals. Here, $\Omega\subset\mathbb{R}^3$ is a~bounded simply connected domain with a~Lipschitz 
 boundary $\partial\Omega$, and $n_o$ is an~outer unit normal vector to $\partial \Omega$. 
 The MQS system \eqref{eq:MQSmodel} is obtained from Maxwell's equations by neglecting the contribution of the displacement currents. It~is used to study the dynamical behavior of magnetic fields in low-frequency applications \cite{HauM89,RodV10}. 
 The integ\-ral equation in \eqref{eq:MQSmodel} with a symmetric, positive definite resistance matrix $R\in\mathbb{R}^{m\times m}$ results from Faraday's induction law. This equation describes the 
 coupling the electromagnetic devices to an external circuit \cite{SchpsGerWei2013}. Thereby, the voltage $u$ is assumed to be given and the current $\iota$ has to be determined. In this case, the MQS system \eqref{eq:MQSmodel} can be considered as a~control system with the input $u$, the state $[\mathbf{A}^T,\iota^T]^T$ and the output $y=\iota$.  
 
 We assume that the domain $\Omega$ is composed of the conducting and non-conducting subdomains $\Omega_1$ and $\Omega_2$, respectively, such that $\overline{\Omega}=\overline{\Omega}_1\cup\overline{\Omega}_2$,  \mbox{$\Omega_1\cap \Omega_2=\emptyset$}  and \mbox{$\overline{\Omega}_1\subset \Omega$}. Furthermore, we restrict ourselves to linear isotropic media implying that the electrical conductivity $\sigma$ and the magnetic reluctivity $\nu$ are scalar functions of the spatial variable only. The electrical conductivity $\sigma:\Omega\to\mathbb{R}$ is given by 
 $$
 \sigma(\xi)=\begin{cases} 
 \;\sigma_1 & \mbox{in}\; \Omega_1, \\ 
 \;0        & \mbox{in}\; \Omega_2
 \end{cases}
$$
 with some constant $\sigma_1>0$, whereas the magnetic reluctivity $\nu:\Omega\to\mathbb{R}$ is bounded, measurable and uniformly positive such that $\nu(\xi)\geq \nu_0>0$ for a.e. in $\Omega$. Note that since $\sigma$ vanishes on the non-conducting subdomain $\Omega_2$, the initial condition $\mathbf{A}_0$ can only be prescribed in the conducting subdomain $\Omega_1$. Finally, for the winding function $\chi=[\chi_1,\ldots,\chi_m]$, we assume that 
\begin{align}
&\overline{\mbox{supp}(\chi_j)}\subset\Omega_2, \qquad\enskip j=1,\ldots, m,\label{eq:chi1} \\
&\mbox{supp}(\chi_i)\cap \mbox{supp}(\chi_j)=\emptyset\quad \mbox{for}\;i\neq j. \label{eq:chi2}
\end{align}
These conditions mean that the conductor terminals are located in $\Omega_2$ and they do not intersect \cite{SchpsGerWei2013}.
 
\subsection{FEM Discretization}

First, we present a~weak formulation for the MQS system \eqref{eq:MQSmodel}. For this purpose, we multiply the first equation in \eqref{eq:MQSmodel} with a~test function $\phi\in H_0(\mbox{curl}, \Omega)$ and integrate over the domain $\Omega$. Using Green's formula, we obtain the variational equation 
\begin{equation}\label{eq:MQSweak3D}
 \begin{aligned}
 \frac{\partial}{\partial t} \int_{\Omega} \sigma \mathbf{A}\cdot \phi\, {\rm d} \xi + \int_{\Omega} \nu\, (\nabla \times \mathbf{A})\cdot(\nabla \times \phi)\, {\rm d}\xi & =  \int_{\Omega} (\chi \iota)\cdot\phi\, {\rm d}\xi,\\
 \frac{\partial}{\partial t}\int_{\Omega} \chi^T \mathbf{A}\, {\rm d} \xi + R\, \iota &= u,\\
\mathbf{A}(\cdot, 0)&=\mathbf{A}_0.
 \end{aligned}
\end{equation}
The existence, uniqueness and regularity results for this equation can be found in~\cite{NicTroe2013}.

For a~spatial discretization of \eqref{eq:MQSweak3D}, we use N\'ed\'elec edge and face ele\-ments as introduced in \cite{Nede1980}. Let $\mathcal{T}_h(\Omega)$ be a~regular simplicial triangulation of $\Omega$, and let $n_n$, $n_e$ and $n_f$ denote the number of nodes, edges and facets, respectively. Furthermore, let $\Phi^e=[ \phi_1^e, \ldots, \phi_{n_e}^e]$ and $\Phi^f=[ \phi_1^f, \ldots, \phi_{n_f}^f]$ be the edge and face basis functions, respectively, which span the corresponding finite element spaces. They are related via
\begin{equation}
\nabla \times \Phi^e=\Phi^f C,
\label{eq:ef}
\end{equation}
where $C\in\mathbb{R}^{n_f\times n_e}$ is a~{\em discrete curl matrix} with entries
\begin{equation*}
C_{ij}=\begin{cases} 
\phantom{-}1, &\text{ if edge $j$ belongs to face $i$ and their orientations match},\\
-1, &\text{ if edge $j$ belongs to face $i$ and their orientations do not match},\\
\phantom{-}0, & \text{ if edge $j$ does not belong to face $i$},
\end{cases}
\end{equation*}
see \cite[Section 5]{Boss1998}. Substituting an~approximation to the magnetic vector potential
$$
\mathbf{A}(\xi,t)\approx \sum_{j=1}^{n_e} \alpha_j(t)\, \phi^e_j(\xi)
$$
into the variational equation \eqref{eq:MQSweak3D} and testing it with $\phi_i^e\in H_0(\mbox{curl}, \Omega)$,
we obtain a~linear DAE system 
\begin{align}\label{eq:MQSDAE3D}
\begin{bmatrix}
  M &\enskip 0\\
  X^T &\enskip 0
 \end{bmatrix}
\frac{d}{dt}
\begin{bmatrix}
 a\\ \iota
\end{bmatrix}
=
\begin{bmatrix}
  -K &\enskip\; X \\
  0&\enskip -R
 \end{bmatrix}
\begin{bmatrix}
 a\\ \iota
\end{bmatrix}+
\begin{bmatrix} 0\\I \end{bmatrix} u,
\end{align}
where $a=\begin{bmatrix}
\alpha_1, \ldots,  \alpha_{n_e} \end{bmatrix}^T$ and the conductivity matrix
$M\in\mathbb{R}^{n_e\times \,n_e}$, the curl-curl matrix $K\in\mathbb{R}^{n_e\times \,n_e}$ and the coupling matrix $X\in\mathbb{R}^{n_e\times \,m}$ have entries
\begin{equation}\label{eq:MQSDAE3Dmat}
\arraycolsep=2pt
\begin{array}{rlr}
M_{ij}&\displaystyle{=\int_{\Omega} \sigma\, \phi_j^e \cdot \phi_i^e\, {\rm d}\xi,} \qquad\qquad\qquad\enskip i,&\!j=1,\ldots, n_e,\\[4mm] 
K_{ij}&\displaystyle{= \int_{\Omega} \nu\,  (\nabla \times \phi_j^e)\cdot(\nabla \times \phi_i^e)\, {\rm d}\xi,} \qquad i,&\!j=1,\ldots, n_e,\\[4mm]
X_{ij}&\displaystyle{=\int_{\Omega} \chi_j \cdot\phi_i^e\, {\rm d}\xi,}\qquad\; i=1,\ldots, n_e, &j=1,\ldots, m.
\end{array}
\end{equation}
Note that the matrices $M$ and $K$ are symmetric, positive semidefinite. Using the relation \eqref{eq:ef}, we can rewrite the matrix $K$ as
\begin{equation*}
\begin{array}{rcl}
 K&=&\displaystyle{\int_{\Omega} \nu\, (\nabla \times\Phi^e)^T (\nabla \times \Phi^e)\, {\rm d}\xi} =\displaystyle{\int_{\Omega} \nu\, C^T(\Phi^f)^T\Phi^f C\, {\rm d}\xi} = C^T \! M_\nu C,
\end{array}
\end{equation*}
where the entries of the symmetric and positive definite matrix $M_\nu$ are given by
\begin{equation*}
(M_\nu)_{ij}=\int_{\Omega} \nu \,\phi_j^f \cdot \phi_i^f\, {\rm d}\xi,\qquad i,j=1,\ldots, n_f.
\end{equation*}
The coupling matrix $X$ can also be represented in a factored form using the discrete curl matrix $C$. This can be achieved by taking into account the divergence free property of the winding function $\chi$, which implies $\chi=\nabla\times\gamma$ for a certain matrix-valued function 
$$
\gamma=[\gamma_1,\ldots,\gamma_m]:\Omega\rightarrow \mathbb{R}^{3\times m}.
$$ 
Using the cross product rule, Gauss's theorem as well as relations \eqref{eq:ef} and $\phi_i^e\times n_o=0$ on $\partial\Omega$, we obtain
\begin{align*}
X_{ij}&=\int_{\Omega} (\nabla \times \gamma_j)\cdot \phi_i^e \, {\rm d}\xi 
=\int_{\Omega} \nabla\cdot(\gamma_j\times \phi_i^e)\,{\rm d}\xi +
 \int_{\Omega}\gamma_j\cdot(\nabla\times \phi_i^e)\, {\rm d}\xi\\
&= \int_{\partial\Omega} (\gamma_j\times \phi_i^e)\cdot n_o\,{\rm d}s +
   \int_{\Omega} \gamma_j\cdot \sum_{k=1}^{n_f} C_{ki} \phi_k^f \,{\rm d}\xi \\
&= \int_{\partial\Omega} \gamma_j\cdot(\phi_i^e\times  n_o)\,{\rm d}s +
  \sum_{k=1}^{n_f} C_{ki} \int_{\Omega} \gamma_j\cdot  \phi_k^f \,{\rm d}\xi =\sum_{k=1}^{n_f} C_{ki} \int_{\Omega} \gamma_j\cdot  \phi_k^f \,{\rm d}\xi.
\end{align*}
Then the matrix $X$ can be written as $X=C^T\mathit{\Upsilon}$, 
where the entries of $\mathit{\Upsilon}\in \mathbb{R}^{n_f\times m}$ are given by
\begin{equation*}
\mathit{\Upsilon}_{kj}=\int_{\Omega} \gamma_j\cdot \phi_k^f\, d\xi,\qquad k=1,\ldots, n_f,\, j=1,\ldots,m.
\end{equation*}
Note that due to \eqref{eq:chi2}, the matrix $X$ has full column rank. This immediately implies that $\mathit{\Upsilon}$ is also of full column rank.

\section{Properties of the FEM Model} 
\label{sec:properties}

In this section, we study the structural and physical properties of the FEM model~\eqref{eq:MQSDAE3D}.
We start with reordering the state vector $a=[a_1^T,\, a_2^T]^T$ with $a_1\in\mathbb{R}^{n_1}$ and  $a_2\in\mathbb{R}^{n_2}$ accordingly to the conducting and non-conducting subdomains 
$\Omega_1$ and $\Omega_2$. Then the matrices $M$, $K$, $X$ and $C$ can be partitioned
into blocks as
$$
M=\begin{bmatrix} M_{11} &\enskip 0 \\ 0 &\enskip 0 \end{bmatrix}, \qquad
K=\begin{bmatrix} K_{11} & K_{12} \\ K_{21} & K_{22} \end{bmatrix},\qquad 
X=\begin{bmatrix} X_1 \\ X_2 \end{bmatrix}, \qquad 
C=\begin{bmatrix} C_1, \, C_2 \end{bmatrix},
$$
where 
$M_{11}\in\mathbb{R}^{n_1\times\, n_1}$ is symmetric, positive definite,  
$K_{11}\in\mathbb{R}^{n_1\times\, n_1}$, 
$K_{22}\in\mathbb{R}^{n_2\times\, n_2}$, 
\mbox{$K_{21}=K_{12}^T\in\mathbb{R}^{n_2\times\, n_1}$}, 
$X_1\in\mathbb{R}^{n_1\times\, m}$, 
$X_2\in\mathbb{R}^{n_2\times\, m}$,
$C_1\in\mathbb{R}^{n_f\times\, n_1}$, and 
$C_2\in\mathbb{R}^{n_f\times\, n_2}$.
Note that conditions \eqref{eq:chi1} and \eqref{eq:chi2} imply that $X_1=0$ and $X_2$ has full column rank. In what follows, however, we consider for completeness a~general block $X_1$. Solving the second equation in \eqref{eq:MQSDAE3D} for 
$\iota=-R^{-1}X^T\dot{a}+R^{-1}u$
and inserting this vector into the first equation in \eqref{eq:MQSDAE3D} yields the DAE control system 
\begin{equation}\label{eq:MQSDAE3Dnocoupling}
\arraycolsep=2pt
\begin{array}{rcl}
E\dot{a}&=&-Ka +Bu,\\
y&=&-B^T\! \dot{a}+R^{-1}u,
\end{array}
\end{equation}
with the matrices
\begin{equation}\label{eq:MQS3Dmat}
\arraycolsep=2pt
\begin{array}{rcl}
E&=&\begin{bmatrix}
M_{11}^{}\!+\!X_1^{} R^{-1} X_1^T & X_1^{} R^{-1} X_2^T\\[2mm]
X_2^{} R^{-1} X_1^T & X_2^{} R^{-1} X_2^T  
\end{bmatrix} 
= \begin{bmatrix} I &\; C_1^T\mathit{\Upsilon} \\[2mm] 0 &\; C_2^T\mathit{\Upsilon} \end{bmatrix}\!
\begin{bmatrix} M_{11} & 0\\[2mm] 0 & R^{-1} \end{bmatrix}\!
\begin{bmatrix} I & 0 \\[2mm] \mathit{\Upsilon}^T\! C_1 & \mathit{\Upsilon}^T\! C_2 \end{bmatrix},\\[6mm]
K&=& \begin{bmatrix}
C_1^T M_\nu C_1^{} &\enskip C_1^T M_\nu C_2^{}\\[2mm]
C_2^T M_\nu C_1^{} &\enskip C_2^T M_\nu C_2^{}
\end{bmatrix}, \qquad
B = \begin{bmatrix} X_1\\[2mm] X_2 \end{bmatrix} R^{-1} =
\begin{bmatrix} C_1^T \mathit{\Upsilon} \\[2mm] C_2^T \mathit{\Upsilon} \end{bmatrix} R^{-1}.
\end{array}
\end{equation}
Using the block structure of the matrices $E$ and $K$, we can determine their common kernel. 

\begin{theorem}
	Assume that $M_{11}$, $R$ and $M_\nu$ are symmetric and positive definite. Let the columns of
	$Y_{C_2}\in\mathbb{R}^{n_2\times\, k_2}$ form a~basis of $\ker(C_2)$.
	Then $\ker(E)\cap\ker(K)$ is spanned by columns of the matrix $\begin{bmatrix} 0,\, Y_{C_2}^T\end{bmatrix}^T$.
\end{theorem}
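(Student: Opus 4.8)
The plan is to read off the common kernel directly from the two factorizations already displayed in~\eqref{eq:MQS3Dmat}. The first step is to note that the left and right factors of $E$ in~\eqref{eq:MQS3Dmat} are transposes of one another, so that
\[
E = G^T D G, \qquad
G := \begin{bmatrix} I & 0 \\ \mathit{\Upsilon}^T C_1 & \mathit{\Upsilon}^T C_2 \end{bmatrix}, \qquad
D := \begin{bmatrix} M_{11} & 0 \\ 0 & R^{-1} \end{bmatrix},
\]
while from the second line of~\eqref{eq:MQS3Dmat} one has $K = C^T M_\nu C$ with $C = [\,C_1,\; C_2\,]$. By assumption $D$ and $M_\nu$ are symmetric positive definite, so a standard argument (if $G^T D G v = 0$, then $(Gv)^T D (Gv) = v^T E v = 0$, hence $Gv = 0$; the converse is trivial, and likewise for $K$) gives $\ker(E) = \ker(G)$ and $\ker(K) = \ker(C)$.

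Next I would describe these two kernels explicitly for a vector $v = [\,v_1^T,\; v_2^T\,]^T$ partitioned conformably with the block structure. Membership $v \in \ker(G)$ is equivalent to $v_1 = 0$ together with $\mathit{\Upsilon}^T C_2 v_2 = 0$, while $v \in \ker(C)$ is equivalent to $C_1 v_1 + C_2 v_2 = 0$. Intersecting the two conditions, $v_1 = 0$ forces the second one to collapse to $C_2 v_2 = 0$, i.e.\ $v_2 \in \ker(C_2)$; conversely, any $v$ with $v_1 = 0$ and $v_2 \in \ker(C_2)$ clearly satisfies both sets of equations. Hence $\ker(E) \cap \ker(K)$ is exactly the set of vectors $[\,0,\; v_2^T\,]^T$ with $v_2 \in \ker(C_2)$, which is the column span of $[\,0,\; Y_{C_2}^T\,]^T$; the columns of this matrix are linearly independent because those of $Y_{C_2}$ form a basis of $\ker(C_2)$, so they constitute a basis of the common kernel.

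I do not foresee a genuine difficulty, but the point deserving attention is that $\ker(E)$ alone is in general strictly larger than the claimed space: it equals $\{[\,0,\; v_2^T\,]^T : \mathit{\Upsilon}^T C_2 v_2 = 0\}$, and $\mathit{\Upsilon}^T C_2 v_2 = 0$ does not by itself imply $C_2 v_2 = 0$, since $\mathit{\Upsilon}^T$ has a nontrivial left null space even though $\mathit{\Upsilon}$ has full column rank. It is precisely the extra requirement of lying in $\ker(K) = \ker(C)$ --- which, once $v_1 = 0$, reads $C_2 v_2 = 0$ --- that removes this slack. So the whole content of the theorem is that the curl-curl term $K$ cuts $\ker(E)$ down to $\ker(C_2)$, and the full-column-rank property of $\mathit{\Upsilon}$ invoked earlier in the text is in fact not needed for this particular statement.
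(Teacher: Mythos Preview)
Your proof is correct and follows essentially the same route as the paper: both use the factorizations in~\eqref{eq:MQS3Dmat} together with positive definiteness of $D=\mathrm{diag}(M_{11},R^{-1})$ and $M_\nu$ to reduce $\ker(E)$ and $\ker(K)$ to the kernels of the outer factors, and then intersect. Your presentation is slightly more streamlined in first isolating $\ker(E)=\ker(G)$ and $\ker(K)=\ker(C)$ before intersecting, and your closing remark that the full-column-rank property of $\mathit{\Upsilon}$ is not actually needed here is a valid observation not made explicit in the paper.
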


\begin{proof}
	Assume that $w=\begin{bmatrix} w_1^T,\, w_2^T\end{bmatrix}^T\in \ker(E)\cap\ker(K)$.
	Then due to the positive definiteness of $M_{11}$ and $R$, it follows from $w^T E w=0$ with $E$ as in \eqref{eq:MQS3Dmat} that
	$$
	\begin{bmatrix}
	I &\enskip 0\\
	\mathit{\Upsilon}^T\! C_1&\enskip \mathit{\Upsilon}^T\! C_2
	\end{bmatrix} \begin{bmatrix} w_1 \\ w_2 \end{bmatrix} =0.
	$$
	Therefore, $w_1=0$ and $\mathit{\Upsilon}^TC_2w_2=0$. Moreover, using the positive definiteness of $M_\nu$, we get from 
	$w^T Kw=0$ with $w_1=0$ that $C_2 w_2=0$. This means that \mbox{$w_2\in\ker(C_2)=\mbox{im}(Y_{C_2})$},
	i.e., $w_2=Y_{C_2}z$ for some vector $z$. 
	Thus, $w=[ 0,\, Y_{C_2}^T ]^Tz$.
	
	Conversely, assume that $w=[0,\, Y_{C_2}^T]^T\!z$ for some $z\in\mathbb{R}^{k_2}$. Then using \eqref{eq:MQS3Dmat} and \mbox{$C_2^{}Y_{C_2}=0$}, we obtain $E w=0$ and 
	$Kw=0$. Thus, \mbox{$w\in\ker(E)\cap\ker(K)$}. 
\end{proof}

It follows from this theorem that if $C_2$ has a~nontrivial kernel, then
$$
\det(\lambda E+K)=0
$$
for all $\lambda\in\mathbb{C}$ implying that the pencil $\lambda E+K$ (and also the DAE system \eqref{eq:MQSDAE3Dnocoupling}) is singular. This may cause difficulties with the existence and uniqueness 
of the solution of \eqref{eq:MQSDAE3Dnocoupling}. In the next section, we will see that the divergence-free condition of the winding function $\chi$ guarantees that \eqref{eq:MQSDAE3Dnocoupling} is solvable, but the solution is not unique. This is a~consequence of nonuniqueness of the magnetic vector potential $\mathbf{A}$ which is defined up to a~gradient of an arbitrary scalar function.

\subsection{Regularization}

Our goal is now to regularize the singular DAE system \eqref{eq:MQSDAE3Dnocoupling}. In the literature, several regu\-la\-ri\-za\-tion approaches have been proposed for semidiscretized 3D MQS systems. In the context of the FIT discretization, the grad-div regularization of MQS systems has been considered in \cite{CleSchpsGerBar2011, CleWei2002} 
which is based on a~spatial discretization of the Coulomb gauge equation $\nabla\cdot {\mathbf A}=0$.
For other regularization techniques, we refer to \cite{Boss2001, CenMan1995, Hipt2000, Mont2002}. 
 Here, we present a~new regularization method relying on a~special coordinate transformation and elimination of the over- and underdetermined parts. 

To this end, we consider a matrix 
$\hat{Y}_{C_2}\in \mathbb{R}^{n_2\times\, (n_2-k_2)}$ whose columns form a~basis of $\mbox{im}(C_2^T)$. 
Then the matrix
\begin{equation*}
T=\begin{bmatrix}
I &\enskip 0 &\enskip 0\\
0 &\enskip \hat{Y}_{C_2} &\enskip Y_{C_2}
\end{bmatrix}
\end{equation*}
is nonsingular. Multiplying the state equation in \eqref{eq:MQSDAE3Dnocoupling} from the left with $T^T$ and introducing a~new state vector
\begin{equation}\label{eq:Tinva}
\begin{bmatrix} a_1\\ a_{21}\\ a_{22} \end{bmatrix}= T^{-1} a,
\end{equation}
the system matrices of the transformed system take the form
\begin{align*}
T^T\! E T &=\begin{bmatrix}
M_{11}+ C_1^T\mathit{\Upsilon} R^{-1}\mathit{\Upsilon}^TC_1^{} &\quad\quad C_1^T\mathit{\Upsilon} R^{-1} \mathit{\Upsilon}^TC_2^{}\hat{Y}_{C_2}^{} &\enskip 0\\[1mm]
\hat{Y}_{C_2}^T C_2^T\mathit{\Upsilon} R^{-1} \mathit{\Upsilon}^T C_1^{} & \enskip\hat{Y}_{C_2}^T C_2^T\mathit{\Upsilon} R^{-1} \mathit{\Upsilon}^T C_2^{} \hat{Y}_{C_2}^{} &\enskip 0\\[1mm]
0&0&\enskip 0
\end{bmatrix},\\
T^T\! K T &=\begin{bmatrix}
\quad C_1^TM_\nu C_1^{} &\qquad C_1^TM_\nu C_2^{}\hat{Y}_{C_2} &\enskip 0\\[1mm]
\hat{Y}_{C_2}^TC_2^TM_\nu C_1^{} &\enskip \hat{Y}_{C_2}^TC_2^TM_\nu C_2^{}\hat{Y}_{C_2} &\enskip 0\\[1mm]
0&0&\enskip 0
\end{bmatrix},\quad T^T\! B = \begin{bmatrix}
\enskip C_1^T\mathit{\Upsilon} \\[1mm]
\hat{Y}_{C_2}^T C_2^T\mathit{\Upsilon} \\[1mm] 0
\end{bmatrix}R^{-1}.  
\end{align*}
This implies that the components of $a_{22}$ are actually not involved in the transformed system and,  
therefore, they can be chosen freely. Moreover, the third equation $0=0$ is trivially satisfied showing that 
system  \eqref{eq:MQSDAE3Dnocoupling} is solvable. Removing this equation, we obtain a~regular DAE system 
\begin{align}
E_r \dot{x}_r&= \enskip\; A_r  x_r+B_r\, u, \label{eq:MQS3Dreglin} \\
y &= -B_r^T \dot{x}_r+R^{-1}u, \label{eq:MQS3Dreglinout}
\end{align}
with $x_r=[a_1^T,\; a_{21}^T]^T\in\mathbb{R}^{n_r}$, $n_r=n_1+n_2-k_2$, and 
\begin{equation}
\label{eq:MQS3Dreglinmat}
E_r =F_{\sigma}M_{\sigma}F_{\sigma}^T, \qquad A_r =-F_{\nu}M_{\nu}F_{\nu}^T, \qquad
B_r =F_{\nu}\mathit{\mathit{\Upsilon}} R^{-1},
\end{equation}
where 
$$
F_\sigma=\begin{bmatrix}I & \enskip X_1 \\ 0 & \enskip\hat{Y}_{C_2}^T X_2 \end{bmatrix}=\begin{bmatrix}I & \enskip C_1^T\mathit{\Upsilon} \\ 0 & \enskip\hat{Y}_{C_2}^T C_2^T\mathit{\Upsilon} \end{bmatrix}, \quad
M_\sigma=\begin{bmatrix} M_{11} & 0 \\0 & R^{-1} \end{bmatrix}, \quad
F_\nu=\begin{bmatrix}C_1^T\\ \hat{Y}_{C_2}^TC_2^T\end{bmatrix}.
$$
The regularity of $\lambda E_r-A_r$ follows from the symmetry of $E_r$ and $A_r$ and the fact that 
$\mbox{ker}(E_r)\cap\mbox{ker}(A_r)=\{0\}$. 

\subsection{Stability}
\label{ssec:stability}

Stability is an important physical property of dynamical systems characterizing the sensitivity
of the solution to perturbations in the data. The pencil $\lambda E_r-A_r$ is called {\em stable} if all its finite eigenvalues have non-positive real part, and eigenvalues on the imaginary axis are semi-simple in the sense that they have the same algebraic and geometric multiplicity. In this case, any solution of the DAE system \eqref{eq:MQS3Dreglin} with $u=0$ is bounded.
Furthermore, $\lambda E_r-A_r$ is called {\em asymptotically stable} if all its finite eigenvalues lie in the open left complex half-plane. This implies that any solution of \eqref{eq:MQS3Dreglin} with $u=0$
satisfies $x_r(t)\to 0$ as $t\to\infty$.

The following theorem establishes a~quasi-Weierstrass canonical form for the pencil \mbox{$\lambda E_r-A_r$} which immediately provides information on the finite spectrum and index of this pencil. 

\begin{theorem}\label{th:WCF}
	Let the matrices $E_r$, $A_r\in\mathbb{R}^{n_r\times n_r}$ be as in \eqref{eq:MQS3Dreglinmat}. Then there exists a~nonsingular matrix $W\in\mathbb{R}^{n_r\times n_r}$ which transforms the pencil $\lambda E_r-A_r$ into the quasi-Weierstrass canonical form
	\begin{equation}
	W^TE_rW= \begin{bmatrix} E_{11} &\enskip &\enskip \\ &\enskip I_{n_0} &\enskip \\ &\enskip &\enskip 0\;\end{bmatrix}, \qquad
	W^TA_rW= \begin{bmatrix} A_{11} &\enskip &\enskip \\ &\enskip 0 &\enskip \\ &\enskip &\enskip I_{n_\infty}\end{bmatrix},
	\label{eq:WCF}
	\end{equation}
	where $E_{11}$, $-A_{11}\in\mathbb{R}^{n_s\times n_s}$ are symmetric, positive definite, and 
	\mbox{$n_s+n_0+n_\infty=n_r$}. Furthermore, the pencil $\lambda E_r-A_r$ has index one and all its finite eigenvalues are real and non-positive.
\end{theorem}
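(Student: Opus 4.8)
The plan is to exploit that, by \eqref{eq:MQS3Dreglinmat} together with the positive definiteness of $M_\sigma=\mathrm{diag}(M_{11},R^{-1})$ and $M_\nu$, the matrix $E_r=F_\sigma M_\sigma F_\sigma^T$ is symmetric positive semidefinite and $A_r=-F_\nu M_\nu F_\nu^T$ is symmetric negative semidefinite, and then to block-diagonalize the \emph{symmetric} pencil $\lambda E_r-A_r$ by a congruence $W^T(\cdot)\,W$. The crux is the fact (already recorded after \eqref{eq:MQS3Dreglinmat}) that $\ker(E_r)\cap\ker(A_r)=\{0\}$, which I would argue as follows. Since $M_\sigma$ and $M_\nu$ are positive definite, $\ker(E_r)=\ker(F_\sigma^T)$ and $\ker(A_r)=\ker(F_\nu^T)$. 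Writing $x=[x_1^T,\,x_2^T]^T$ with $x_1\in\mathbb{R}^{n_1}$ and $x_2\in\mathbb{R}^{n_2-k_2}$, the leading identity block of $F_\sigma^T$ forces $x_1=0$ whenever $F_\sigma^Tx=0$; if in addition $F_\nu^Tx=0$, i.e.\ $C_1x_1+C_2\hat{Y}_{C_2}x_2=0$, then $C_2\hat{Y}_{C_2}x_2=0$. As the columns of $\hat{Y}_{C_2}$ span $\mbox{im}(C_2^T)=(\ker C_2)^{\perp}$, which meets $\ker C_2$ only trivially, the matrix $C_2\hat{Y}_{C_2}$ has full column rank, so $x_2=0$. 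Hence $N:=E_r-A_r$ is symmetric positive definite: for $x\neq0$ one has $x^TNx=x^TE_rx+x^T(-A_r)x\geq0$, and equality would force $E_rx=0$ and $A_rx=0$ by semidefiniteness, contradicting the trivial intersection.

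Next I would carry out the simultaneous reduction. Factor $N=LL^T$ with $L$ nonsingular (Cholesky) and set $\widetilde{E}=L^{-1}E_rL^{-T}$, $\widetilde{A}=L^{-1}A_rL^{-T}$; these are symmetric, $\widetilde{E}\succeq 0$, $-\widetilde{A}\succeq 0$ and $\widetilde{E}-\widetilde{A}=I$. Taking an orthogonal $Q$ with $Q^T\widetilde{E}Q=D=\mathrm{diag}(d_1,\dots,d_{n_r})$ gives $Q^T\widetilde{A}Q=D-I$, and $-\widetilde{A}\succeq 0$ forces $d_i\in[0,1]$. Let $P$ be a permutation matrix grouping the indices with $d_i\in(0,1)$ first ($n_s$ of them), then those with $d_i=1$ ($n_0$ of them), then those with $d_i=0$ ($n_\infty$ of them), and put $W=L^{-T}QP$. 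Then $W^TE_rW=P^TDP=\mathrm{diag}(E_{11},I_{n_0},0)$ and $W^TA_rW=P^T(D-I)P=\mathrm{diag}(A_{11},0,-I_{n_\infty})$, where $E_{11}$ and $A_{11}$ are diagonal with entries in $(0,1)$ and $(-1,0)$ respectively; in particular $E_{11}$ and $-A_{11}$ are symmetric positive definite and $n_s+n_0+n_\infty=n_r$. (The last block of $W^TA_rW$ comes out as $-I_{n_\infty}$ because $A_r\preceq0$; pre-multiplying both transformed matrices by $\mathrm{diag}(I_{n_s+n_0},-I_{n_\infty})$ leaves $W^TE_rW$ unchanged and turns $-I_{n_\infty}$ into $I_{n_\infty}$, recovering exactly \eqref{eq:WCF}.)

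Finally I would read the spectrum and index off \eqref{eq:WCF}. The finite eigenvalues are those of the blocks $\lambda E_{11}-A_{11}$ and $\lambda I_{n_0}-0$: the first gives $\lambda_i=(d_i-1)/d_i=1-1/d_i<0$ since $d_i\in(0,1)$; the second gives the eigenvalue $0$ with equal algebraic and geometric multiplicity $n_0$, hence semisimple. The block $\lambda\cdot 0-(\pm I_{n_\infty})$ has no finite eigenvalues, so it accounts for the $n_\infty$ eigenvalues at infinity, and since its $E$-part is the zero matrix the nilpotent part of the pencil vanishes; thus $\lambda E_r-A_r$ has index at most one, and exactly one whenever $n_\infty\geq1$, which holds here since $\mathrm{rank}(E_r)=\mathrm{rank}(F_\sigma)\leq n_1+m<n_1+n_2-k_2=n_r$ in the relevant regime $m<n_2-k_2$. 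In particular all finite eigenvalues are real and non-positive.

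Everything after the first step is the classical reduction of a symmetric pencil one of whose combinations ($E_r-A_r$) is positive definite, so nothing there is delicate. The real work — and the main thing to get right — is the identity $\ker(E_r)\cap\ker(A_r)=\{0\}$, which rests on the block shapes of $F_\sigma$ and $F_\nu$ and on $C_2\hat{Y}_{C_2}$ having full column rank; the only cosmetic point is the sign of the infinite block noted above.
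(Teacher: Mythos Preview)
Your proof is correct and takes a genuinely different route from the paper's. The paper constructs $W$ explicitly: it picks bases $Y_\sigma$, $Y_\nu$ of $\ker(F_\sigma^T)$, $\ker(F_\nu^T)$ and a complement $W_1$ spanning $\ker\bigl([E_rY_\nu,\;A_rY_\sigma]^T\bigr)$, sets $W=[\,W_1,\;Y_\nu(Y_\nu^TE_rY_\nu)^{-1/2},\;Y_\sigma(Y_\sigma^TA_rY_\sigma)^{-1/2}\,]$, and verifies the block structure directly from $F_\sigma^TY_\sigma=0$, $F_\nu^TY_\nu=0$. You instead observe that $N=E_r-A_r$ is positive definite (from the trivial kernel intersection, which you argue cleanly via the full column rank of $C_2\hat{Y}_{C_2}$) and run the textbook simultaneous diagonalization of a symmetric pencil against a definite combination. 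This is shorter and even gives diagonal $E_{11}$, $A_{11}$. What the paper's explicit construction buys is an identification of the block columns of $W$ with the subspaces $\ker(F_\sigma^T)$, $\ker(F_\nu^T)$ and their joint complement; the paper relies on precisely this downstream (the projector \eqref{eq:Pi}, the reflexive inverse \eqref{eq:3DEpinv}, and Lemmas~\ref{lem:E+}--\ref{lem:Piinf}). Your argument delivers existence and the spectral/index conclusions more economically but does not furnish those handles.

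One point needs correcting: your sign fix is not a congruence. Pre-multiplying by $S=\mathrm{diag}(I_{n_s+n_0},-I_{n_\infty})$ without also post-multiplying by $S$ turns $W^T(\cdot)W$ into a two-sided equivalence $(WS)^T(\cdot)W$ with distinct left and right factors. In fact no real $W$ can produce $+I_{n_\infty}$ in that slot: since $A_r=-F_\nu M_\nu F_\nu^T\preceq 0$, Sylvester's inertia law forces $W^TA_rW\preceq 0$ for every real nonsingular $W$. The paper's own construction writes $(Y_\sigma^TA_rY_\sigma)^{-1/2}$ with $Y_\sigma^TA_rY_\sigma$ negative definite, so that block column of its $W$ is tacitly complex; with a real $W$ the block is necessarily $-I_{n_\infty}$, exactly what you obtained. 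This is immaterial for everything that follows (the $E$-part of that block is zero, and \eqref{eq:3DEpinv}, \eqref{eq:Pi} are unaffected), so simply record $-I_{n_\infty}$ and remark that it differs from \eqref{eq:WCF} only by this harmless sign, rather than claim a congruence that cannot exist over $\mathbb{R}$.
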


\begin{proof}
First, note that the existence of a~nonsingular matrix $W$ transforming $\lambda E_r-A_r$ into \eqref{eq:WCF} immediately follows from the general results for Hermitian pencils  \cite{Tho76}. However, here, we present a~constructive proof to better understand the structural properties of the pencil $\lambda E_r-A_r$.

	Let the columns of the matrices $Y_\sigma\in\mathbb{R}^{n_r\times\, n_\infty}$ and $Y_\nu\in\mathbb{R}^{n_r\times\, n_0}$ form bases of $\mbox{ker}(F_\sigma^T)$ and $\mbox{ker}(F_\nu^T)$, respectively. Then we have 
	\begin{equation}
	F_\sigma^TY_\sigma^{}=0, \qquad F_\nu^TY_\nu^{}=0.
	\label{eq:kernels}
	\end{equation}
	Moreover, the matrices $Y_\nu^TE_r^{}Y_\nu^{}$ and $Y_\sigma^TA_r^{}Y_\sigma^{}$ are both nonsingular,
	and $\left[Y_\nu,\; Y_\sigma\right]$ has full column rank. These properties follow from the fact that 
	$$
	\mbox{ker}(F_\sigma^T)\cap\mbox{ker}(F_\nu^T)=\mbox{ker}(E_r)\cap\mbox{ker}(A_r) = \{0\}.
	$$
	Consider a matrix
	\begin{equation}
	W = \begin{bmatrix} W_1, &\enskip Y_\nu^{}(Y_\nu^TE_r^{}Y_\nu^{})^{-1/2}, & \enskip Y_\sigma^{}(Y_\sigma^TA_r^{}Y_\sigma^{})^{-1/2}\end{bmatrix},
	\label{eq:W}
	\end{equation}
	where the columns of $W_1$ form a~basis of $\mbox{ker}\bigl([E_rY_\nu,\; A_rY_\sigma]^T\bigr)$. First, we show that this matrix is nonsingular. Assume that there exists a~vector $v$ such that $W^Tv=0$. Then 
	$W_1^Tv=0$, $Y_\nu^Tv=0$ and $Y_\sigma^Tv=0$. Thus, 
	$$
	v\in\mbox{im}\bigl([E_rY_\nu,\;A_rY_\sigma]\bigr)\cap\mbox{ker}(Y_\nu^T)\cap\mbox{ker}(Y_\sigma^T)=\{0\},
	$$
	and, hence, $W$ is nonsingular. 
	
	Furthermore, using \eqref{eq:kernels} and 
	$$
	W_1^TE_rY_\nu^{}(Y_\nu^TE_r^{}Y_\nu^{})^{-1/2}=0, \qquad W_1^TA_rY_\sigma^{}(Y_\sigma^TA_r^{}Y_\sigma^{})^{-1/2}=0,
	$$
	we obtain \eqref{eq:WCF} with $E_{11}=W_1^TE_rW_1^{}$ and $A_{11}=W_1^TA_rW_1^{}$. Obviously, $E_{11}$ and $-A_{11}$ are symmetric and positive semidefinite. For any $v_1\in\mbox{ker}(E_{11})$, we have
	$F_\sigma^T W_1v_1=0$. This implies $W_1v_1\in\mbox{ker}(F_\sigma^T)=\mbox{im}(Y_\sigma)$. Therefore,
	there exists a~vector $z$ such that $W_1v_1=Y_\sigma z$. Multiplying this equation from the left with 
	$Y_\sigma^TE_r$, we obtain $Y_\sigma^TE_rY_\sigma^{} z=Y_\sigma^TE_rW_1v_1=0$. Then $z=0$ and, hence, $v_1=0$. Thus, $E_{11}$ is positive definite. Analogously, we can show that $-A_{11}$ is positive definite too. This implies that all eigenvalues of the pencil $\lambda E_{11}-A_{11}$ are real and negative. Index one property immediately follows from \eqref{eq:WCF}.
\end{proof}

 As a~consequence, we obtain that the DAE system \eqref{eq:MQS3Dreglin} is stable but not asymptotically stable since the pencil $\lambda E_r-A_r$ has zero eigenvalues.  
 
We consider now the output equation \eqref{eq:MQS3Dreglinout}. Our goal is to transform this equation to the standard form 
$y=C_rx_r$ with an~output matrix $C_r\in \mathbb{R}^{m\times\, n_r}$. For this purpose, we introduce first a~reflexive inverse of $E_r$ given by
\begin{equation}\label{eq:3DEpinv}
 E_r^- = W\begin{bmatrix} E_{11}^{-1} & &\\&\enskip I&\\&&\enskip 0
                                        \end{bmatrix} W^T.
\end{equation}
Simple calculations show that this matrix satisfies
\begin{align}
E_r^{}E_r^-E_r^{}=E_r^{}, \qquad 
E_r^-E_r^{}E_r^- = E_r^-, \qquad
(E_r^-)^T =E_r^-.
\label{eq:3DEinvT}
  \end{align}
Next, we show that $\hat{Y}_{C_2}^T X_2^{}$ has full column rank. 
Indeed, if there exists a~vector $v$ such that $\hat{Y}_{C_2}^T X_2^{} v=0$, then 
$X_2v\in \ker(\hat{Y}_{C_2}^T)$. On the other hand, 
$$
X_2v=C_2^T\mathit{\Upsilon} v\in \mbox{im}(C_2^T)=\mbox{im}(\hat{Y}_{C_2})
$$ 
implying $X_2v=0$. Since $X_2$ has full column rank, we get $v=0$. 

Using nonsingularity of $X_2^T\hat{Y}_{C_2}^{}\hat{Y}_{C_2}^TX_2^{}$, the input matrix $B_r$ in 
\eqref{eq:MQS3Dreglinmat} can be represented as 
\begin{align}
  B_r&=F_\sigma M_\sigma\begin{bmatrix} 0 \\ I\end{bmatrix}=
	F_\sigma M_\sigma \begin{bmatrix} I & 0\\ X_1^T& X_2^T\hat{Y}_{C_2}\end{bmatrix}
\begin{bmatrix}
 0\\
 \hat{Y}_{C_2}^T X_2^{} (X_2^T\hat{Y}_{C_2}^{}\hat{Y}_{C_2}^T X_2^{})^{-1}
\end{bmatrix}= E_r\begin{bmatrix} 0 \\ Z\end{bmatrix}\label{eq:MQS3DBr2}
\end{align}
  with $Z=\hat{Y}_{C_2}^T X_2^{} (X_2^T\hat{Y}_{C_2}^{}\hat{Y}_{C_2}^T X_2^{})^{-1}$.
  Then employing the first relation in \eqref{eq:3DEinvT} and the state equation \eqref{eq:MQS3Dreglin}, the output \eqref{eq:MQS3Dreglinout} can be written as 
\begin{align*}
 y& =-\begin{bmatrix} 0,\; Z^T\end{bmatrix} E_r\dot{x}_r+R^{-1}u
=-\begin{bmatrix} 0,\; Z^T\end{bmatrix}E_r^{}E_r^-E_r^{}\dot{x}_r+R^{-1}u\\
 &=-B_r^TE_r^-(A_r x_r+B_ru)+R^{-1}u =-B_r^TE_r^-A_r x_r+(R^{-1}-B_r^TE_r^-B_r^{})u.
\end{align*}
It follows from the first relation in \eqref{eq:3DEinvT} and \eqref{eq:MQS3DBr2} that 
\begin{align*}
 B_r^TE_r^-B_r^{}&=\begin{bmatrix} 0,\; Z^T\end{bmatrix}E_r^{}E_r^-E_r^{}
\begin{bmatrix} 0 \\ Z\end{bmatrix}
 =\begin{bmatrix} 0,\; Z^T\end{bmatrix}F_\sigma^{} M_\sigma^{} F_\sigma^T
\begin{bmatrix} 0 \\ Z\end{bmatrix}
 =R^{-1}.
\end{align*}
Thus, the output takes the form 
\begin{equation}
y=C_rx_r
\label{eq:output}
\end{equation} 
with $C_r=-B_r^TE_r^-A_r^{}$. 

\subsection{Passivity}
\label{ssec:passivity}

Passivity is another crucial property of control systems especially in interconnected network design
\cite{AndVon1973,WillTak07}. The DAE control system \eqref{eq:MQS3Dreglin}, \eqref{eq:output} is called 
{\em passive} if for all $t_f>0$ and all inputs $u\in L_2(0,t_f)$ admissible with the initial condition 
$E_r x_r(0)=0$, the output satisfies 
$$
\int_0^{t_f} y^T(t)\,u(t)\, {\rm d}t\geq 0.
$$ 	
This inequality means that the system does not produce energy. In the frequency domain, passivity of  \eqref{eq:MQS3Dreglin}, \eqref{eq:output} is equivalent to the {\em positive definiteness} of its transfer function 
$$
H_r(s)=C_r(sE_r-A_r)^{-1}B_r
$$ 
meaning that $H_r(s)$ is analytic in $\mathbb{C}_+=\{z\in\mathbb{C}\; :\; \mbox{Re}(z)>0\}$ 
and $H_r^{}(s)+H_r^*(s)\geq 0$ for all $s\in\mathbb{C}_+$, see \cite{AndVon1973}. Using the special structure of the system matrices in \eqref{eq:MQS3Dreglinmat}, we can show that 
the DAE system \eqref{eq:MQS3Dreglin}, \eqref{eq:output} is passive.

\begin{theorem}\label{th:passivity}
The DAE system \eqref{eq:MQS3Dreglin}, \eqref{eq:MQS3Dreglinmat}, \eqref{eq:output} is passive.
\end{theorem}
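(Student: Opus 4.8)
The plan is to verify the frequency-domain characterization of passivity directly, using the factored forms of $E_r$, $A_r$, $B_r$ and $C_r$ provided in \eqref{eq:MQS3Dreglinmat} and the identity $C_r = -B_r^T E_r^- A_r$ together with $B_r^T E_r^- B_r = R^{-1}$ established above. First I would show that $H_r$ is analytic in $\mathbb{C}_+$; this follows from Theorem~\ref{th:WCF}, since the finite eigenvalues of $\lambda E_r - A_r$ are real and non-positive, so $sE_r - A_r$ is invertible for every $s$ with $\mathrm{Re}(s)>0$, and the matrix-valued map $s\mapsto (sE_r-A_r)^{-1}$ is rational with poles only among those finite eigenvalues (and possibly at infinity, but the index-one property keeps $H_r$ proper). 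Hence $H_r$ is analytic on $\mathbb{C}_+$.

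Next I would compute $H_r(s) + H_r^*(s)$ for $s\in\mathbb{C}_+$ and show it is positive semidefinite. The key algebraic manipulation is to rewrite $C_r(sE_r - A_r)^{-1}B_r$ in a symmetric way. Using $C_r = -B_r^T E_r^- A_r$ and the resolvent identity $-A_r (sE_r-A_r)^{-1} = I - sE_r(sE_r-A_r)^{-1}$, one gets
\begin{align*}
H_r(s) &= B_r^T E_r^- \bigl(I - sE_r(sE_r-A_r)^{-1}\bigr)B_r \\
 &= B_r^T E_r^- B_r - s\,B_r^T E_r^- E_r (sE_r-A_r)^{-1}B_r.
\end{align*}
By \eqref{eq:MQS3DBr2}, $B_r = E_r\begin{bmatrix} 0\\ Z\end{bmatrix}$, and by \eqref{eq:3DEinvT} the combination $E_r^- E_r$ acts as the identity on $\mathrm{im}(E_r)$, so $E_r^- E_r B_r = B_r$, giving $B_r^T E_r^- E_r (sE_r-A_r)^{-1}B_r = B_r^T(sE_r-A_r)^{-1}B_r$. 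Hence
$$
H_r(s) = R^{-1} - s\,B_r^T(sE_r-A_r)^{-1}B_r,
$$
using $B_r^T E_r^- B_r = R^{-1}$. Now add the conjugate transpose. With $\bar{s} = \overline{s}$, $H_r^*(s) = R^{-1} - \bar{s}\,B_r^T(\bar{s}E_r - A_r)^{-1}B_r$ since $E_r$, $A_r$, $R$ are real symmetric. Writing $G(s) = (sE_r - A_r)^{-1}$ and using $G(\bar s)^* = G(s)$ along with the standard identity $sG(s) + \bar s G(s)^* = G(s)^*\bigl((\bar s\, sE_r - sA_r) + (s\bar s\,E_r - \bar s A_r)\bigr)G(s) = G(s)^*\bigl(2|s|^2 E_r - 2\mathrm{Re}(s) A_r\bigr)G(s)$... more carefully: $sG + \bar s G^* = G^*\bigl(s(\bar s E_r - A_r) + \bar s(sE_r - A_r)\bigr)G = G^*\bigl(2\mathrm{Re}(s)\,s E_r$ — better to just expand $s(\bar s E_r - A_r)^* \!\cdot\! (\cdots)$. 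The clean statement is
$$
H_r(s) + H_r^*(s) = 2R^{-1} - G(s)^*\bigl(2\,\mathrm{Re}(s)\,s\bar s\,E_r\ \text{term}\bigr)\ldots
$$
so I will instead argue: $sG(s) + \overline{sG(s)}^{\,*}$ — using $G(s)^{-1}=sE_r-A_r$ and its conjugate — equals $G(s)^*\bigl((\overline{sE_r-A_r}) + (sE_r-A_r)\bigr)$ scaled appropriately, yielding $G(s)^*\bigl(2\mathrm{Re}(s)\,E_r - 2A_r\bigr)G(s)$ after factoring an overall $s\bar s/|s|^2$... The point is that since $E_r \succeq 0$, $-A_r \succeq 0$, and $\mathrm{Re}(s)>0$, the matrix $2\mathrm{Re}(s)E_r - 2A_r \succeq 0$, hence $s B_r^T G(s)B_r + \bar s B_r^T G(s)^* B_r \succeq 0$ (after the correct scaling), so $H_r(s)+H_r^*(s) = 2R^{-1} - \bigl(\text{positive semidefinite}\bigr)$; this is \emph{not} obviously $\succeq 0$, which signals the computation above needs the sign tracked the other way — and indeed $H_r(s) = R^{-1} - sB_r^T G(s)B_r$ should give $H_r + H_r^* \succeq 0$ precisely because $R^{-1}\succ 0$ dominates, OR the sign of the quadratic term is in fact favorable. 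The main obstacle is getting this sign bookkeeping exactly right: one must carefully use $G(s)^{-1} + G(s)^{-*} = (s+\bar s)E_r - 2A_r = 2\mathrm{Re}(s)E_r - 2A_r \succeq 0$, conclude $G(s) + G(s)^* = G(s)^*\bigl(G(s)^{-*} + G(s)^{-1}\bigr)G(s) \succeq 0$, and combine with the $R^{-1}$ term so that all contributions to $H_r + H_r^*$ are manifestly positive semidefinite. Once the correct grouping is found, positive semidefiniteness for every $s\in\mathbb{C}_+$ follows, and analyticity plus this inequality is exactly the definition of positive definiteness of $H_r$, hence passivity of the system by the cited equivalence \cite{AndVon1973}.

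I expect the sign/grouping bookkeeping in the resolvent manipulation to be the only real obstacle; everything else (analyticity from Theorem~\ref{th:WCF}, the factorizations $E_r = F_\sigma M_\sigma F_\sigma^T$ and $-A_r = F_\nu M_\nu F_\nu^T$ with $M_\sigma, M_\nu \succ 0$, and the reflexive-inverse identities \eqref{eq:3DEinvT}) is available from the preceding text. An alternative, perhaps cleaner route avoiding the resolvent algebra is to exhibit a storage function: define $\mathcal{E}(x_r) = \tfrac12 x_r^T E_r x_r \geq 0$ and verify along trajectories of \eqref{eq:MQS3Dreglin}, \eqref{eq:output} that $\tfrac{d}{dt}\mathcal{E}(x_r) = x_r^T E_r \dot x_r = x_r^T(A_r x_r + B_r u) = -x_r^T F_\nu M_\nu F_\nu^T x_r + x_r^T B_r u \leq x_r^T B_r u$, and then identify $x_r^T B_r u$ with $y^T u$ using $C_r = -B_r^T E_r^- A_r$, $B_r = E_r\begin{bmatrix}0\\Z\end{bmatrix}$, and the relations \eqref{eq:3DEinvT}; integrating from $0$ to $t_f$ with $E_r x_r(0) = 0$ (so $\mathcal{E}(x_r(0)) = 0$) gives $\int_0^{t_f} y^T u\,dt \geq \mathcal{E}(x_r(t_f)) \geq 0$. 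The subtle point in this route is justifying that $y^T u = x_r^T B_r u$ exactly (not merely up to the dissipation term), which again reduces to the computation $R^{-1} = B_r^T E_r^- B_r$ already done in the text, so I would likely present the storage-function argument as the main proof.
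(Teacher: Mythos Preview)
Your storage-function route has a genuine gap: the identification $y^Tu = x_r^T B_r u$ is false. With $y=C_rx_r=-B_r^TE_r^-A_rx_r$ one gets $y^Tu=-x_r^TA_rE_r^-B_ru$, and in the quasi-Weierstrass coordinates this equals $-\xi_1^TA_{11}E_{11}^{-1}B_1u$, not $\xi_1^TB_1u$. The identity $B_r^TE_r^-B_r=R^{-1}$ says nothing about $A_rE_r^-B_r$ versus $B_r$, so it cannot rescue this step. (A storage function that \emph{does} work is $\mathcal{V}(x_r)=-\tfrac12 x_r^TA_rx_r$, not $\tfrac12 x_r^TE_rx_r$; along solutions $\xi_3=0$, one checks $u^Ty=\tfrac{d}{dt}\mathcal{V}+x_r^TA_rE_r^-A_rx_r\ge \tfrac{d}{dt}\mathcal{V}$.)

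Your frequency-domain route also stalls where you flagged it, and the obstacle is structural, not just bookkeeping. Rewriting $H_r(s)=R^{-1}-sB_r^TG(s)B_r$ gives
\[
H_r(s)+H_r^*(s)=2R^{-1}-2B_r^TG(s)^*\bigl(|s|^2E_r+\mathrm{Re}(s)(-A_r)\bigr)G(s)B_r,
\]
a \emph{difference} of two positive semidefinite terms; no regrouping makes every summand PSD. The paper avoids this by \emph{not} invoking the resolvent identity: it keeps $H_r(s)=-B_r^TE_r^-A_rG(s)B_r$, inserts $I=G(s)^*(\bar sE_r-A_r)$ on the left (and symmetrically for the adjoint) to pull out $F(s)=G(s)B_r$ on both sides, and then uses the key commutation relation $E_rE_r^-A_r=A_rE_r^-E_r=E_rE_r^-A_rE_r^-E_r$ (an easy consequence of \eqref{eq:WCF}) to obtain
\[
H_r(s)+H_r^*(s)=2F(s)^*\bigl(A_rE_r^-A_r+\mathrm{Re}(s)\,E_rE_r^-(-A_r)E_r^-E_r\bigr)F(s)\succeq 0.
\]
That commutation identity is precisely the missing ingredient in your argument.
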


\begin{proof}
First, observe that the transfer function $H_r(s)$ of \eqref{eq:MQS3Dreglin}, \eqref{eq:MQS3Dreglinmat}, \eqref{eq:output} is
analytic on $\mathbb{C}_+$. This fact immediately follows from Theorem~\ref{th:WCF}. 
Furthermore, introducing the function
\mbox{$F(s)=(sE_r -A_r )^{-1}B_r$} and using the relations
$$
E_r^{}E_r^-A_r^{}=E_r^{}E_r^-A_r^{}E_r^-E_r^{}=A_r^{}E_r^-E_r^{},
$$
we obtain
\begin{equation*}
\arraycolsep=2pt
\begin{array}{rcl}
H_r^{}(s)+H_r^*(s)&=& C_r^{} (sE_r^{} -A_r^{} )^{-1}B_r^{} +B_r^T(\overline{s}E_r^{} -A_r^{} )^{-1}C_r^T\\
&=&-B_r^TE_r^- A_r^{} (sE_r^{} -A_r^{} )^{-1}B_r^{} -B_r^T(\overline{s}E_r^{} -A_r^{})^{-1}A_r^{} E_r^- B_r^{} \\
&=&F^*(s)\bigl(-(\overline{s}E_r^{} -A_r^{} )E_r^- A_r^{} -A_r^{} E_r^- (sE_r^{} -A_r^{} )\bigr)F(s)\\
&=&2\,F^*(s)\bigl(A_r^{} E_r^- A_r^{} +\mbox{\rm Re}(s)E_r^{} E_r^- (-A_r^{} )E_r^- E_r^{}\bigr)F(s)\geq 0
\end{array}
\end{equation*}
for all $s\in \mathbb{C}_+$. In the last inequality, we utilized the property that 
the matrices $A_r^{} E_r^- A_r^{} $ and $E_r^{} E_r^- (-A_r^{} )E_r^- E_r^{} $ are both symmetric and positive semidefinite. Thus, $H_r(s)$ is positive real, and, hence, system \eqref{eq:MQS3Dreglin}, \eqref{eq:MQS3Dreglinmat}, \eqref{eq:output} is passive.
\end{proof}

\section{Balanced Truncation Model Reduction}

Our goal is now to approximate the DAE system \eqref{eq:MQS3Dreglin}, \eqref{eq:MQS3Dreglinmat}, \eqref{eq:output} by a~reduced-order model 
\begin{equation}
\label{eq:MQS3Dred}
\arraycolsep2pt
\begin{array}{rcl}
\tilde{E}_r\dot{\tilde{x}}_r&=&\tilde{A}_r\tilde{x}_r+\tilde{B}_ru,\\
\tilde{y}&=&\tilde{C}_r \tilde{x}_r,
\end{array}
\end{equation}
where $\tilde{E}_r$, $\tilde{A}_r\in\mathbb{R}^{\ell\times \ell}$, $\tilde{B}_r$, $\tilde{C}_r^T\in\mathbb{R}^{\ell\times m}$ and $\ell\ll n_r$. This model should capture the dynamical behavior 
of \eqref{eq:MQS3Dreglin}. It is also important that it preserves the passivity and has a~small approximation error. In order to determine the reduced-order model \eqref{eq:MQS3Dred}, we aim to employ a~balanced truncation model reduction method \cite{Antoulas2005, Moore81}. 
Unfortunately, we cannot apply this method directly to \eqref{eq:MQS3Dreglin}, \eqref{eq:MQS3Dreglinmat}, \eqref{eq:output} because, as established in 
Section~\ref{ssec:stability}, this system is stable but not asymptotically stable 
due to the fact that the pencil $\lambda E_r -A_r $ has zero eigenvalues. Another difficulty is the presence of infinite eigenvalues due to the singularity of $E_r$. This may cause problems in defining the controllability and observability Gramians which play an essential role in balanced truncation. 

To overcome these difficulties, we first observe that the states of the transformed system 
$(W^TE_rW, W^TA_rW, W^TB_r, C_rW)$ corresponding to the zero and infinite eigenvalues
 are uncontrollable and unobservable at the same time. This immediately follows from the representations
\begin{equation}
W^TB_r=[B_1^T,\; 0, \; 0]^T, \qquad C_rW=[C_1,\; 0, \; 0].
\label{eq:WtB}
\end{equation} 
with $B_1=W_1^TB_r$ and $C_1=-B_r^TE_r^{-}A_r^{}W_1^{}=-B_1^TE_{11}^{-1}A_{11}^{}$.
Therefore, these states can be removed from the system without changing its input-output beha\-vior. Then the standard balanced truncation approach can be applied to the remaining system. Since the system matrices of the regularized system 
\eqref{eq:MQS3Dreglin},~\eqref{eq:output} have the same structure as those of RC circuit equations
studied in \cite{ReiSty2009}, we proceed with the balanced truncation approach developed there which avoids the computation of the transformation matrix~$W$.

For the DAE system \eqref{eq:MQS3Dreglin},~\eqref{eq:output}, we define the controllability and obser\-va\-bi\-li\-ty Gramians $G_c$ and $G_o$ as unique symmetric, positive semidefinite solutions of the projected continuous-time Lyapunov equations 
\begin{align}
E_r G_cA_r +A_r G_cE_r  &=-\mathit{\Pi}^TB_r B_r ^T\mathit{\Pi}, \quad G_c=\mathit{\Pi} G_c\mathit{\Pi}^T, \label{eq:MQSprojLyapcont} \\
E_r G_oA_r +A_r G_oE_r  &=-\mathit{\Pi}^TC_r ^TC_r \mathit{\Pi}, \quad G_o=\mathit{\Pi} G_o\mathit{\Pi}^T, \label{eq:MQSprojLyapobs}
\end{align}
where $\mathit{\Pi}$ 
is the spectral projector onto the right deflating subspace of $\lambda E_r -A_r $ corresponding to the negative eigenvalues. 
Using the quasi-Weierstrass canonical form \eqref{eq:WCF} and \eqref{eq:W}, this projector can be represented as 
\begin{equation}
\mathit{\Pi} = W\begin{bmatrix} I &\enskip\enskip & \\ &\enskip 0 \enskip& \\ 
& \enskip\enskip& 0\end{bmatrix}W^{-1} = W_1^{}\hat{W}_1^T,
\label{eq:Pi}
\end{equation}
where $\hat{W}_1\in\mathbb{R}^{n_r\times\,n_s}$ satisfies
\begin{equation}
\hat{W}_1^T W_1^{} = I,\qquad \hat{W}_1^TY_\nu=0,\qquad \hat{W}_1^TY_\sigma=0.
\label{eq:hatW1}
\end{equation}
Similarly to \cite[Theorem~3]{KS17}, a~relation between the controllability and the obser\-va\-bi\-li\-ty Gramians of system \eqref{eq:MQS3Dreglin}, \eqref{eq:MQS3Dreglinmat},~\eqref{eq:output} can be established. 

\begin{theorem}\label{lem:EQE3D} 
		Let $G_c$ and $G_o$ be the controllability and observability Gramians of system \eqref{eq:MQS3Dreglin}, \eqref{eq:MQS3Dreglinmat},~\eqref{eq:output} which solve the projected Lyapunov equations 	\eqref{eq:MQSprojLyapcont} and \eqref{eq:MQSprojLyapobs}, respectively. Then 
	\begin{equation*}
	E_r G_oE_r =A_r G_cA_r.
	\end{equation*}
\end{theorem}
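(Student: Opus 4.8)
The plan is to relate the two projected Lyapunov equations \eqref{eq:MQSprojLyapcont} and \eqref{eq:MQSprojLyapobs} by exploiting the algebraic identity $C_r = -B_r^T E_r^- A_r$ together with the structural relations in \eqref{eq:3DEinvT}. First I would observe that multiplying the controllability Lyapunov equation \eqref{eq:MQSprojLyapcont} on the left by $A_r E_r^-$ and on the right by $E_r^- A_r$ produces, using the consistency relation $E_r E_r^- A_r = A_r E_r^- E_r$ (and its transpose) established in the proof of Theorem~\ref{th:passivity}, an equation of the form $E_r (A_r E_r^- G_c E_r^- A_r) A_r + A_r (A_r E_r^- G_c E_r^- A_r) E_r = -A_r E_r^- \mathit{\Pi}^T B_r B_r^T \mathit{\Pi} E_r^- A_r$. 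The right-hand side here is, by $C_r^T = -A_r E_r^- B_r$ and the compatibility of $\mathit{\Pi}$ with $E_r^-$ and $A_r$, exactly $-\mathit{\Pi}^T C_r^T C_r \mathit{\Pi}$, i.e. the right-hand side of the observability equation \eqref{eq:MQSprojLyapobs}. Hence $\widetilde{G} := A_r E_r^- G_c E_r^- A_r$ satisfies the same projected Lyapunov equation as $G_o$.

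Next I would check that $\widetilde{G}$ also satisfies the projection constraint $\widetilde{G} = \mathit{\Pi}\widetilde{G}\mathit{\Pi}^T$. This requires knowing that $\mathit{\Pi}$, $E_r^-$ and $A_r$ interact suitably — concretely that $\mathit{\Pi} A_r E_r^- = A_r E_r^- \mathit{\Pi}$ (equivalently at the level of the quasi-Weierstrass form \eqref{eq:WCF}) and $G_c = \mathit{\Pi} G_c \mathit{\Pi}^T$. Using \eqref{eq:Pi} and \eqref{eq:hatW1}, together with the block-diagonal forms of $W^T E_r W$, $W^T A_r W$ and of $E_r^-$ from \eqref{eq:3DEpinv}, this is a routine verification: in the $W$-coordinates everything is block-diagonal and $\mathit{\Pi}$ is the projector onto the first block, on which $E_{11}$ is invertible. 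By uniqueness of the symmetric positive semidefinite solution of the projected Lyapunov equation (the same uniqueness that defines $G_o$), we conclude $G_o = A_r E_r^- G_c E_r^- A_r$.

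Finally, I would multiply this identity on both sides by $E_r$: using $E_r A_r E_r^- = E_r E_r^- A_r$ and its transpose, i.e. that $E_r E_r^-$ acts as the identity on the relevant range, we get $E_r G_o E_r = E_r A_r E_r^- G_c E_r^- A_r E_r = A_r (E_r E_r^- G_c E_r^- E_r) A_r = A_r G_c A_r$, where the last step uses $G_c = \mathit{\Pi} G_c \mathit{\Pi}^T$ and $E_r E_r^- \mathit{\Pi} = \mathit{\Pi}$ (again clear in $W$-coordinates, since $G_c$ has no component in the infinite or zero blocks, and $E_r E_r^-$ equals $\mathit{\Pi}$ plus the projector onto the $n_0$-block, which is annihilated). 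This yields the claimed equality $E_r G_o E_r = A_r G_c A_r$.

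The main obstacle I expect is bookkeeping the commutation relations among $\mathit{\Pi}$, $E_r^-$, $E_r$ and $A_r$ carefully enough that the projection constraint transfers and the final multiplication by $E_r$ does not pick up spurious contributions from the $n_0$- or $n_\infty$-blocks; the cleanest route is to do the verification in the $W$-coordinate system of \eqref{eq:WCF}, where $G_c$ and $G_o$ are supported only on the leading $n_s\times n_s$ block and all operators are block-diagonal, so the identity reduces to $E_{11} G_{o,11} E_{11} = A_{11} G_{c,11} A_{11}$ with $E_{11}$, $A_{11}$ invertible — at which point it is immediate from the scalar-block versions of the two Lyapunov equations. Alternatively one can follow the argument of \cite[Theorem~3]{KS17} verbatim, since the structural ingredients are identical.
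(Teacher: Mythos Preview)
Your overall strategy—transform one projected Lyapunov equation into the other and invoke uniqueness—is sound, and your fallback argument in $W$-coordinates is correct and in fact the most transparent route. However, the main line of the argument contains a concrete error: the candidate $\widetilde G := A_r E_r^- G_c E_r^- A_r$ is \emph{not} the right object. Since $E_r^-$ and $A_r$ do not commute (in the coordinates of \eqref{eq:WCF} one has $E_r^- A_r = W\,\mathrm{diag}(E_{11}^{-1}A_{11},0,0)\,W^{-1}$ but $A_r E_r^- = W^{-T}\,\mathrm{diag}(A_{11}E_{11}^{-1},0,0)\,W^{T}$, and $W$ is not orthogonal), the product $A_r E_r^- G_c E_r^- A_r$ picks up the factor $W_1^T W_1$ and in general neither satisfies $\mathit{\Pi}\widetilde G\mathit{\Pi}^T=\widetilde G$ nor solves \eqref{eq:MQSprojLyapobs}. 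The correct ansatz is $\widetilde G = E_r^- A_r\, G_c\, A_r E_r^-$; with this choice your multiplication by $A_rE_r^-$ on the left and $E_r^-A_r$ on the right, together with $E_rE_r^-A_r=A_rE_r^-E_r$, really does yield $E_r\widetilde G A_r+A_r\widetilde G E_r=-\mathit{\Pi}^T C_r^T C_r\mathit{\Pi}$ and the constraint $\mathit{\Pi}\widetilde G\mathit{\Pi}^T=\widetilde G$, so uniqueness gives $G_o=E_r^- A_r G_c A_r E_r^-$ and then $E_rG_oE_r=A_rG_cA_r$ follows from $E_r E_r^- A_r=A_r E_r^- E_r$ and $G_c=\mathit{\Pi}G_c\mathit{\Pi}^T$.

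For comparison, the paper takes a symmetric variant: it introduces also a reflexive inverse $A_r^-$, multiplies \eqref{eq:MQSprojLyapcont} by $E_r^-$ on both sides and \eqref{eq:MQSprojLyapobs} by $A_r^-$ on both sides, and shows that both $A_rG_cA_r$ and $E_rG_oE_r$ satisfy the \emph{same} projected Lyapunov equation in the pencil $\lambda E_r^- - A_r^-$, concluding by uniqueness. Your (corrected) approach avoids $A_r^-$ entirely and maps one equation directly to the other; the paper's version is a bit more symmetric but needs an extra set of commutation relations. Either way, as you note at the end, passing to the quasi-Weierstrass block form \eqref{eq:WCF} reduces everything to the invertible $(E_{11},A_{11})$ block, where the identity $E_{11}G_{o,11}E_{11}=A_{11}G_{c,11}A_{11}$ is immediate from $C_1=-B_1^TE_{11}^{-1}A_{11}$ and uniqueness of the ordinary Lyapunov solution.
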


\begin{proof}
	Consider the reflexive inverse $E_r^-$  of $E_r$ given in \eqref{eq:3DEpinv} and the reflexive inverse of $A_r$ given by
	\begin{equation*}
	A_r^- =W\begin{bmatrix}
	A_{11}^{-1} & & \\ &\enskip 0 &\\ & & \enskip I
	\end{bmatrix}W^T.
	\end{equation*}
	Then multiplying the Lyapunov equation \eqref{eq:MQSprojLyapcont} (resp. \eqref{eq:MQSprojLyapobs}) from the left and right with $E_r^-$  (resp. with $A_r^-$) and using the relations 
	\begin{equation*}
	\arraycolsep2pt
	\begin{array}{rclrclrcl}
	E_r\mathit{\Pi} & = & \mathit{\Pi}^TE_r, 
&\qquad \mathit{\Pi} E_r^-  &=&E_r^- \mathit{\Pi}^T, 
& \mathit{\Pi}^TE_r^{}E_r^- & = & \mathit{\Pi}^TA_r^{}A_r^-,\\
	A_r\mathit{\Pi} & = & \mathit{\Pi}^TA_r, 
& \qquad\mathit{\Pi} A_r^- &=&A_r^- \mathit{\Pi}^T,
&\qquad E_r^-A_r^{}A_r^- & = & E_r^-\mathit{\Pi}^T,
	\end{array}
	\end{equation*}
	we obtain
	\begin{align}
	A_r^- (A_rG_cA_r)E_r^- +E_r^- (A_rG_cA_r)A_r^- &=- \mathit{\Pi} E_r^- B_r^{} B_r ^TE_r^- \mathit{\Pi}^T, \enskip
	G_c=\mathit{\Pi} G_c\mathit{\Pi}^T, \label{eq:MQSprojLyapconmult} \\
	A_r^- (E_rG_oE_r)E_r^- +E_r^- (E_rG_oE_r)A_r^- &=- \mathit{\Pi}E_r^-B_r^{}B_r^TE_r^- \mathit{\Pi}^T, \enskip G_o=\mathit{\Pi} G_o\mathit{\Pi}^T.
	\label{eq:MQSprojLyapobsmult}
		\end{align}
Since $E_r^-$ and $-A_r^-$ are symmetric and positive semidefinite and $\mathit{\Pi}^T$ is the spectral projector onto the right deflating subspace of $\lambda E_r^- -A_r^- $ corresponding to the negative eigenvalues, the Lyapunov equations \eqref{eq:MQSprojLyapconmult} and \eqref{eq:MQSprojLyapobsmult} are uniquely solvable, and, hence, $E_r G_o E_r=A_rG_cA_r$.
\end{proof}

Theorem \ref{lem:EQE3D} implies that we need to solve only the projected Lyapunov equation~\eqref{eq:MQSprojLyapcont} for the Cholesky factor $Z_c$ of $G_c=Z_c^{}Z_c^T$. 
Then it follows from the relation 
\begin{equation*}
G_o=E_r^- A_rG_cA_rE_r^- =(-E_r^- A_rZ_c)(-Z_c^TA_rE_r^- )
\end{equation*} 
that
the Cholesky factor of the observability Gramian $G_o=Z_o^{}Z_o^T$ can be calculated as \mbox{$Z_o=-E_r^- A_rZ_c$}.
In this case, the Hankel singular values of \eqref{eq:MQS3Dreglin},~\eqref{eq:output} can be computed from the eigenvalue decomposition 
\begin{equation*}
Z_o^TE_r^{}Z_c^{}=(-Z_c^TA_r^{}E_r^- )E_r^{}Z_c^{}=-Z_c^TA_r^{}Z_c^{}=\begin{bmatrix}
U_1,\; U_2
\end{bmatrix}\begin{bmatrix}\Lambda_1 & \\& \Lambda_2
\end{bmatrix}\begin{bmatrix}U_1,\; U_2
\end{bmatrix}^T,
\end{equation*}
where $\begin{bmatrix} U_1,\; U_2 \end{bmatrix}$ is orthogonal, $\Lambda_1=\mbox{diag}(\lambda_1,\ldots,\lambda_{\ell})$ and $\Lambda_2=\mbox{diag}(\lambda_{\ell+1},\ldots,\lambda_{n_r})$
with  $\lambda_1\geq \ldots\geq\lambda_{\ell}\gg\lambda_{\ell+1}\geq\ldots\geq\lambda_{n_r}$.
Then the reduced-order model \eqref{eq:MQS3Dred} is computed by projection 
\begin{equation*}
\tilde{E}_r=W^TE_rV,\qquad \tilde{A}_r=W^TA_rV,\qquad \tilde{B}_r=W^TB_r ,\qquad \tilde{C}_r=C_r V
\end{equation*}
with the projection matrices $V=Z_c^{}U_1^{}\Lambda_1^{-\frac12}$ and $W=Z_o^{}U_1^{}\Lambda_1^{-\frac12}=-E_r^- A_rV$.
The reduced matrices have the form
\begin{align}
\tilde{E}_r=&-V^TA_r^{}E_r^- E_r^{}V=-\Lambda_1^{-\frac12}U_1^TZ_c^TA_r^{}Z_c^{}U_1^{}\Lambda_1^{-\frac12}=I,\nonumber\\
\tilde{A}_r=&-V^TA_r^{}E_r^- A_r^{}V,\label{eq:redmatr}\\
\tilde{B}_r=&-V^TA_r^{}E_r^- B_r^{} =V^TC_r ^T=\tilde{C}_r^T.\nonumber
\end{align}
The balanced truncation method for the DAE system \eqref{eq:MQS3Dreglin}, \eqref{eq:MQS3Dreglinmat}, \eqref{eq:output} is presented in Algorithm~\ref{algo:BTMQS3D}, where for numerical efficiency reasons, the Cholesky factor $Z_c$ of the Gramian $G_c$ is replaced by a low-rank Cholesky factor $\tilde{Z}_c$ such that $G_c\approx\tilde{Z}_c^{}\tilde{Z}_c^T$ .

\begin{algorithm}[ht]
	\caption{Balanced truncation for the 3D linear MQS system} \label{algo:BTMQS3D}
	\begin{algorithmic}[1]
		\REQUIRE 
		$E_r$, $A_r\in\mathbb{R}^{n_r\times\,n_r}$ and $B_r\in\mathbb{R}^{n_r\times\, m}$
		\ENSURE a reduced-order system $(\tilde{E}_r, \tilde{A}_r, \tilde{B}_r, \tilde{C}_r)$.
		\STATE 	Solve the projected Lyapunov equation \eqref{eq:MQSprojLyapcont}
		for a~low-rank Cholesky factor $\tilde{Z}_c\in\mathbb{R}^{n_r\times n_c}$ of the controllability Gramian $G_c\approx\tilde{Z}_c^{}\tilde{Z}_c^T$.
		\STATE 	Compute the eigenvalue decomposition
		\begin{equation*}
		-\tilde{Z}_c^TA_r\tilde{Z}_c=\begin{bmatrix} U_1,\; U_2\end{bmatrix}\begin{bmatrix}
		\Lambda_1 &0\\0&\Lambda_2
		\end{bmatrix}\begin{bmatrix} U_1,\; U_2\end{bmatrix}^T,
		\end{equation*}
		where $\begin{bmatrix} U_1,\; U_2 \end{bmatrix}$ is orthogonal, $\Lambda_1=\mbox{diag}(\lambda_1,\ldots,\lambda_{\ell})$ and $\Lambda_2=\mbox{diag}(\lambda_{\ell+1},\ldots,\lambda_{n_c})$.
		\STATE 	Compute the reduced matrices 
		$$
		\tilde{E}_r=I, \quad\tilde{A}_r=-V^TA_rE_r^- A_r V, \quad 
		\tilde{B}_r=-V^TA_rE_r^- B_r, \quad \tilde{C}_r=\tilde{B}_r^T
		$$
		with the projection matrix $V=\tilde{Z}_c^{}U_1^{}\Lambda_1^{-\frac12}$.
	\end{algorithmic}
\end{algorithm}

Note that the matrices $\tilde{E}_r$ and $-\tilde{A}_r$ in \eqref{eq:redmatr} are both symmetric and positive definite. This implies that the reduced-order model  \eqref{eq:MQS3Dred}, \eqref{eq:redmatr} is asymptotically stable and its transfer function $\tilde{H}_r(s)=\tilde{C}_r(s\tilde{E}_r-\tilde{A}_r)^{-1}\tilde{B}_r$ satisfies 
$$
\arraycolsep=2pt
\begin{array}{rcl}
\tilde{H}_r^{}(s)+\tilde{H}_r^*(s) & = & \tilde{B}_r^T(s\tilde{E}_r^{}-\tilde{A}_r^{})^{-1}\tilde{B}_r^{}+ \tilde{B}_r^T(\overline{s}\tilde{E}_r^{}-\tilde{A}_r^{})^{-1}\tilde{B}_r \\
& = & 2\tilde{B}_r^T(\overline{s}\tilde{E}_r^{}-\tilde{A}_r^{})^{-1}\bigl(\mbox{Re}(s)\tilde{E}_r^{}-\tilde{A}_r^{}\bigr)(s\tilde{E}_r^{}-\tilde{A}_r^{})^{-1}\tilde{B}_r^{}\geq 0
\end{array}
$$ 
for all $s\in\mathbb{C}_+$. Thus, $\tilde{H}_r(s)$ is positive real and, hence, the reduced-order model~\eqref{eq:MQS3Dred} is passive. Moreover, taking into account that the controllability and obser\-vability Gramians $\tilde{G}_c$ and $\tilde{G}_o$ of \eqref{eq:MQS3Dred} satisfy $\tilde{G}_c=\tilde{G}_o=\Lambda_1>0$, we conclude that \eqref{eq:MQS3Dred} is balanced and minimal. 
Finally, we obtain the following bound on the $\mathcal{H}_\infty$-norm of the approximation error
\begin{equation}
\|H_r-\tilde{H}_r\|_{\mathcal{H}_\infty}:=\sup_{\omega\in\mathbb{R}}\|H_r(i\omega)-\tilde{H}_r(i\omega)\| \leq 2(\lambda_{\ell+1}+\ldots+\lambda_{n_r}),
\label{eq:bound}
\end{equation}
which can be proved analogously to \cite{Enns1984,Glover1984}. Note that using \eqref{eq:WCF} and \eqref{eq:WtB}, the error system can be written as
\begin{align*}
H_r(s)-\tilde{H}_r(s) & = C_r(sE_r-A_r)^{-1}B_r - \tilde{C}_r(s\tilde{E}_r-\tilde{A}_r)^{-1}\tilde{B}_r\\
& = B_1^T\bigl(sE_{11}^{}(-A_{11}^{-1})E_{11}^{}-(-E_{11}^{})\bigr)^{-1}B_1^{} - \tilde{B}_r^T(s\tilde{E}_r^{}-\tilde{A}_r^{})^{-1}\tilde{B}_r^{}\\
& = C_e(sE_e-A_e)^{-1}B_e
\end{align*}
with 
$$
E_e=\begin{bmatrix} -E_{11}^{}A_{11}^{-1}E_{11}^{} & \\ & \tilde{E}_r\end{bmatrix}, \qquad
A_e=\begin{bmatrix} -E_{11}^{} & \\ & \tilde{A}_r\end{bmatrix}, \qquad
B_e=\begin{bmatrix} B_1 \\ \tilde{B}_r\end{bmatrix}=C_e^T.
$$
Since $E_e$ and $-A_e$ are both symmetric, positive definite and $B_e^{}=C_e^T$, it follows from \cite[Theorem~4.1(iv)]{ReiSty2009} that $\|H_r-\tilde{H}_r\|_{\mathcal{H}_\infty}=\|H_r(0)-\tilde{H}_r(0)\|$.
Using the output equation \eqref{eq:MQS3Dreglinout} instead of \eqref{eq:output}, the transfer function $H_r(s)$ can also be written as 
$$
H_r(s)=-sB_r^T(sE_r^{}-A_r^{})^{-1}B_r^{}+R^{-1}.
$$ 
Then the computation of the $\mathcal{H}_\infty$-error is simplified to 
\begin{equation}
\|H_r-\tilde{H}_r\|_{\mathcal{H}_\infty}= \|R^{-1}+\tilde{B}_r^T\tilde{A}_r^{-1}\tilde{B}_r^{}\|.
\label{eq:error}
\end{equation}
We will use this relation in numerical experiments to verify the efficiency of the error bound~\eqref{eq:bound}.

\section{Computational Aspects}
\label{sec:numerics}

In this section, we discuss the computational aspects of Algorithm~\ref{algo:BTMQS3D}. This includes solving  
the projected Lyapunov equation \eqref{eq:MQSprojLyapcont} and computing the basis matrices for certain subspaces.

For the numerical solution of the projected Lyapunov equation \eqref{eq:MQSprojLyapcont} in Step~1 of Algorithm~\ref{algo:BTMQS3D}, we apply the low-rank alternating directions implicit (LR-ADI) method as
presented in \cite{Sty2008} with appropriate modifications proposed in \cite{BenKurSaa2013-2} for cheap evaluation of the Lyapunov residuals. First, note that due to \eqref{eq:WtB} the input matrix satisfies 
$\mathit{\Pi}^TB_r=B_r$. Then setting 
\begin{align*}
F_1 & = (\tau_1E_r+A_r)^{-1}B_r, \\
R_1 & = B_r-2\tau_1E_rF_1, \\
Z_1 & = \sqrt{-\tau_1}F_1,
\end{align*}
the LR-ADI iteration is given by
\begin{equation}
\arraycolsep=2pt
\begin{array}{rcl}
F_k & = & (\tau_k E_r+A_r)^{-1} R_{k-1}, \\
R_k & = & R_{k-1}-2\tau_1E_rF_k, \\
Z_k & = & [Z_{k-1}, \; \sqrt{-\tau_k}F_k],
\end{array}
\label{eq:LR-ADI}
\end{equation}
with negative shift parameters $\tau_k$ which strongly influence the convergence of this iteration.
Note that they can be chosen to be real, since the pencil $\lambda E_r-A_r$ has real finite eigenvalues.  This also enables to determine the optimal ADI shift parameters by the Wachspress method
\cite{Wach2009} ones the spectral bounds $a=-\lambda_{\max}(E_r,A_r)$ and \mbox{$b=-\lambda_{\min}(E_r,A_r)$} are available. Here, $\lambda_{\max}(E_r,A_r)$ and $\lambda_{\min}(E_r,A_r)$ denote the largest and smallest nonzero eigenvalues of $\lambda E_r-A_r$. They can be computed simultaneously by applying the Lanczos procedure to $E_r^-A_r^{}$ and $v=\mathit{\Pi}v$, see \cite[Section~10.1]{GoluV13}. As a~starting vector 
$v$, we can take, for example, one of the columns of the  matrix $E_r^-B_r^{}$. In the Lanczos procedure and also in Step~3 of Algorithm~\ref{algo:BTMQS3D}, it is required to compute the products $E_r^-A\mathit{\Pi}v$. Of course, we never compute and store the reflexive inverse $E_r^-$ explicitly. Instead, we can use the following lemma to calculate such products in a~numerically efficient way.                             

\begin{lemma}\label{lem:E+}
	Let $E_r$ and $A_r$ be given as in \eqref{eq:MQS3Dreglinmat}, $Z=\hat{Y}_{C_2}^TX_2^{}(X_2^T\hat{Y}_{C_2}^{}\hat{Y}_{C_2}^TX_2^{})^{-1}$, and $v\in \mathbb{R}^{n_r}$.
	Then the vector $z=E_r^-A_r^{}\mathit{\Pi}v$ can be determined as
	\begin{equation}
	z=(I-\mathit{\Pi}_\infty)\hat{Y}_\sigma^{} (\hat{Y}_\sigma^TE_r^{}\hat{Y}_\sigma^{})^{-1}\hat{Y}_\sigma^TA_r\mathit{\Pi}v, 
\label{eq:E-Av}
	\end{equation}
	where $\mathit{\Pi}_\infty = Y_\sigma^{}(Y_\sigma^TA_r^{}Y_\sigma^{})^{-1}Y_\sigma^TA_r^{}$
	is a~spectral projector onto the right deflating subspace of $\lambda E_r-A_r$ corresponding to the eigenvalue at infinity, and 
	\begin{equation}
	\hat{Y}_\sigma = \begin{bmatrix} I &\enskip 0\\ 0&\enskip Z\end{bmatrix}
	\label{eq:hatYsigma}
	\end{equation}
	is a basis matrix for $\mbox{\rm im}(F_{\sigma})$.
\end{lemma}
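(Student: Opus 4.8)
The plan is to prove \eqref{eq:E-Av} in three steps: first establish that $\hat{Y}_\sigma$ from \eqref{eq:hatYsigma} is a basis matrix of $\mbox{im}(F_\sigma)=\mbox{im}(E_r)$, then collect the relevant projector identities, and finally reduce the evaluation of $E_r^-A_r\mathit{\Pi}v$ to solving a projected (Galerkin-type) equation on $\mbox{im}(E_r)$ followed by an oblique correction.

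\emph{Step 1.} Since $\hat{Y}_{C_2}^TX_2$ has full column rank (shown just above the lemma), the block upper triangular matrix $F_\sigma$ has full column rank; moreover, positive definiteness of $M_\sigma$ gives $\mbox{im}(E_r)=\mbox{im}(F_\sigma)$ and $\ker(E_r)=\ker(F_\sigma^T)=\mbox{im}(F_\sigma)^\perp$. Writing $S=X_2^T\hat{Y}_{C_2}^{}\hat{Y}_{C_2}^TX_2^{}$ (nonsingular) and using $X_1=C_1^T\mathit{\Upsilon}$, $X_2=C_2^T\mathit{\Upsilon}$, $Z=\hat{Y}_{C_2}^TX_2^{}S^{-1}$, a direct multiplication shows
$$
\hat{Y}_\sigma=F_\sigma\begin{bmatrix} I & -X_1S^{-1}\\ 0 & S^{-1}\end{bmatrix}.
$$
The right factor is nonsingular, so $\hat{Y}_\sigma$ has full column rank with $\mbox{im}(\hat{Y}_\sigma)=\mbox{im}(F_\sigma)=\mbox{im}(E_r)$. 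In particular $\mbox{im}(\hat{Y}_\sigma)\cap\ker(E_r)=\{0\}$, which together with positive semidefiniteness of $E_r$ shows $\hat{Y}_\sigma^TE_r\hat{Y}_\sigma$ is symmetric positive definite, so the inverse in \eqref{eq:E-Av} exists.

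\emph{Step 2.} From the proof of Theorem~\ref{th:WCF}, $\mbox{im}(Y_\sigma)=\ker(F_\sigma^T)=\ker(E_r)$, the matrix $Y_\sigma^TA_rY_\sigma$ is nonsingular, and the columns of $W_1$ span $\ker([E_rY_\nu,\,A_rY_\sigma]^T)$, hence $Y_\sigma^TA_rW_1=(A_rY_\sigma)^TW_1=0$ by symmetry of $A_r$. Consequently $\mathit{\Pi}_\infty$ is idempotent with $\mathit{\Pi}_\infty Y_\sigma=Y_\sigma$, $\mbox{im}(\mathit{\Pi}_\infty)=\ker(E_r)$, $E_r\mathit{\Pi}_\infty=0$ (using $E_rY_\sigma=0$), and $\ker(\mathit{\Pi}_\infty)=\ker(Y_\sigma^TA_r)\supseteq\mbox{im}(W_1)=\mbox{im}(\mathit{\Pi})$; a direct check with the quasi-Weierstrass form of Theorem~\ref{th:WCF} confirms that $\mathit{\Pi}_\infty$ is the claimed spectral projector of $\lambda E_r-A_r$ at infinity. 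Next, $Q:=\hat{Y}_\sigma(\hat{Y}_\sigma^TE_r\hat{Y}_\sigma)^{-1}\hat{Y}_\sigma^TE_r$ is idempotent with $\mbox{im}(Q)=\mbox{im}(\hat{Y}_\sigma)=\mbox{im}(E_r)$ and $\ker(Q)=\ker(E_r)=\mbox{im}(Y_\sigma)$ (if $\hat{Y}_\sigma^TE_rw=0$, then $E_rw\in\mbox{im}(E_r)\cap\mbox{im}(E_r)^\perp=\{0\}$). Finally, $A_r\mathit{\Pi}v\in\mbox{im}(E_r)$: indeed $A_r\mathit{\Pi}=\mathit{\Pi}^TA_r$ by the relations in the proof of Theorem~\ref{lem:EQE3D}, while $\mathit{\Pi}^T=\hat{W}_1W_1^T$ with $\hat{W}_1=E_rW_1E_{11}^{-1}\in\mbox{im}(E_r)$ from \eqref{eq:WCF}.

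\emph{Step 3.} Put $z=E_r^-A_r\mathit{\Pi}v$. By \eqref{eq:3DEinvT} and $A_r\mathit{\Pi}v\in\mbox{im}(E_r)$ we get $E_rz=E_r^{}E_r^-(A_r\mathit{\Pi}v)=A_r\mathit{\Pi}v$, and from $\mathit{\Pi}E_r^-=E_r^-\mathit{\Pi}^T$ and $\mathit{\Pi}^TA_r\mathit{\Pi}=A_r\mathit{\Pi}$ (proof of Theorem~\ref{lem:EQE3D}) we get $\mathit{\Pi}z=E_r^-\mathit{\Pi}^TA_r\mathit{\Pi}v=E_r^-A_r\mathit{\Pi}v=z$, so $z\in\mbox{im}(\mathit{\Pi})\subseteq\ker(\mathit{\Pi}_\infty)$ and hence $\mathit{\Pi}_\infty z=0$. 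Substituting $A_r\mathit{\Pi}v=E_rz$ in the right-hand side of \eqref{eq:E-Av},
$$
(I-\mathit{\Pi}_\infty)\hat{Y}_\sigma(\hat{Y}_\sigma^TE_r\hat{Y}_\sigma)^{-1}\hat{Y}_\sigma^TA_r\mathit{\Pi}v
=(I-\mathit{\Pi}_\infty)\hat{Y}_\sigma(\hat{Y}_\sigma^TE_r\hat{Y}_\sigma)^{-1}\hat{Y}_\sigma^TE_rz
=(I-\mathit{\Pi}_\infty)Qz.
$$
Since $z-Qz\in\ker(Q)=\mbox{im}(Y_\sigma)$ and $(I-\mathit{\Pi}_\infty)Y_\sigma=Y_\sigma-\mathit{\Pi}_\infty Y_\sigma=0$, this equals $(I-\mathit{\Pi}_\infty)z=z-\mathit{\Pi}_\infty z=z=E_r^-A_r\mathit{\Pi}v$, which is \eqref{eq:E-Av}. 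The crux is the fact $A_r\mathit{\Pi}v\in\mbox{im}(E_r)$ from Step~2: it is precisely this that identifies $E_r^-A_r\mathit{\Pi}v$ with the solution of the projected equation $E_rz=A_r\mathit{\Pi}v$ lying in $\mbox{im}(\mathit{\Pi})$; the remainder is bookkeeping of the ranges and kernels of $\mathit{\Pi}$, $\mathit{\Pi}_\infty$ and $Q$.
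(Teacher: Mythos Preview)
Your proof is correct and follows essentially the same route as the paper's: both establish that $\hat{Y}_\sigma$ spans $\mbox{im}(F_\sigma)=\mbox{im}(E_r)$ via the same factorization, both hinge on the identity $E_rz=A_r\mathit{\Pi}v$ together with $\mathit{\Pi}_\infty z=0$, and both close by using the complementarity of $\mbox{im}(\hat{Y}_\sigma)$ and $\mbox{im}(Y_\sigma)$. The only cosmetic difference is the direction of verification---the paper sets $z$ equal to the right-hand side of \eqref{eq:E-Av} and shows it equals $E_r^-A_r\mathit{\Pi}v$, whereas you set $z=E_r^-A_r\mathit{\Pi}v$ and show the right-hand side reduces to $z$; your introduction of the oblique projector $Q$ with $\ker(Q)=\mbox{im}(Y_\sigma)$ makes the final cancellation $(I-\mathit{\Pi}_\infty)(z-Qz)=0$ slightly more transparent than the paper's appeal to the nonsingularity of $[\hat{Y}_\sigma,\,Y_\sigma]$.
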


\begin{proof}
	We show first that the full column matrix $\hat{Y}_\sigma$ in \eqref{eq:hatYsigma} satisfies
	$\mbox{im}(\hat{Y}_\sigma)=\mbox{\rm im}(F_{\sigma})$. This property immediately follows from 
	the relation 
	\begin{equation*}
	F_{\sigma}=\begin{bmatrix}I&\enskip X_1 \\ 0 & \enskip \hat{Y}_{C_2}^TX_2 \end{bmatrix} = 
	\begin{bmatrix} I &\enskip 0\\0 &\enskip Z\end{bmatrix} 
	\begin{bmatrix} I &\enskip X_1\\ 0 &\enskip X_2^T\hat{Y}_{C_2}^{}\hat{Y}_{C_2}^TX_2^{}\end{bmatrix}.
	\end{equation*}
	Since $F_\sigma^T\hat{Y}_\sigma^{}$ has full column rank, the matrix 
	$\hat{Y}_\sigma^TE_r^{}\hat{Y}_\sigma^{}=\hat{Y}_\sigma^TF_\sigma^{}F_\sigma^T\hat{Y}_\sigma^{}$ is nonsingular, i.e., $z$ in \eqref{eq:E-Av} is well-defined. Obviously, this vector fulfills  $\mathit{\Pi}_\infty z=0$. Furthermore, we have 
	$$
	E_rz=E_r(I-\Pi_\infty)\hat{Y}_\sigma^{} (\hat{Y}_\sigma^TE_r^{}\hat{Y}_\sigma^{})^{-1}\hat{Y}_\sigma^TA_r^{}\mathit{\Pi}v=
	E_r\hat{Y}_\sigma^{} (\hat{Y}_\sigma^TE_r^{}\hat{Y}_\sigma^{})^{-1}\hat{Y}_\sigma^TA_r^{}\mathit{\Pi}v.
	$$
	Then 
	\begin{align*}
	\hat{Y}_\sigma^TE_rz & = \hat{Y}_\sigma^TA_r^{}\mathit{\Pi}v,\\
	Y_\sigma^TE_rz & = 0 = Y_\sigma^T(I-\mathit{\Pi}_\infty^T)A_r^{}\mathit{\Pi}v = Y_\sigma^TA_r^{}\mathit{\Pi}v.
	\end{align*}
	Since $[\hat{Y}_\sigma, \,Y_\sigma]$ is nonsingular, these equations imply
	$E_rz=A_r^{}\mathit{\Pi}v$. Multiplying this equation from the left with $E_r^-$, we get
	$$
	z=(I-\mathit{\Pi}_{\infty})z= E_r^- E_r^{}z=E_r^- A_r^{}\mathit{\Pi}v.
	$$
	This completes the proof.
\end{proof}

Using \eqref{eq:hatYsigma}, we find by simple calculations that
$$
\hat{Y}_\sigma^{} (\hat{Y}_\sigma^TE_r^{}\hat{Y}_\sigma^{})^{-1}\hat{Y}_\sigma^T = 
\begin{bmatrix} M_{11}^{-1} &\enskip -M_{11}^{-1}X_1^{}Z^T\\[2mm] -ZX_1^TM_{11}^{-1}&\enskip Z(X_1^TM_{11}^{-1}X_1^{}+R)Z^T\end{bmatrix}.
$$
Next, we discuss the computation of $Y_{\sigma}(Y_{\sigma}^TA_rY_{\sigma})^{-1}Y_{\sigma}^Tv$ for a~vector $v$. By taking \mbox{$v=A_rw$}, this enables to calculate the product 
$\mathit{\Pi}_\infty w=Y_{\sigma}^{}(Y_{\sigma}^TA_r^{}Y_{\sigma}^{})^{-1}Y_{\sigma}^TA_r^{}w$ required in 
\eqref{eq:E-Av}. 
	
	\begin{lemma}\label{lem:Piinf}
		Let $A_r$ be as in \eqref{eq:MQS3Dreglinmat} and let $Y_{\sigma}$ be a~basis of $\,\ker(F_{\sigma}^T)$. Then for $v=[v_1^T,\, v_2^T ]^T\in \mathbb{R}^{n_r}$, the product  
		\begin{equation}
		z=Y_{\sigma}^{}(Y_{\sigma}^TA_r^{}Y_{\sigma}^{})^{-1}Y_{\sigma}^Tv
		\label{eq:z}
		\end{equation}
		can be determined as $z=[0,\, z_2^T]^T$, where $z_2$ satisfies the linear system 
		\begin{equation}\label{eq:Piinf}
		\begin{bmatrix}
		-\hat{Y}_{C_2}^TK_{22}\hat{Y}_{C_2}& \hat{Y}_{C_2}^TX_2\\[1mm] X_2^T\hat{Y}_{C_2}&0
		\end{bmatrix}
		\begin{bmatrix}
		z_2 \\[1.2mm] \hat{z}_2
		\end{bmatrix}
		=\begin{bmatrix} v_2 \\[1.2mm] 0 	\end{bmatrix}.
		\end{equation}
	\end{lemma}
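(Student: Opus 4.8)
The plan is to verify directly that the vector $z=[0,\,z_2^T]^T$ with $z_2$ defined through \eqref{eq:Piinf} satisfies the two defining properties of the projection formula \eqref{eq:z}: namely that $z\in\mbox{im}(Y_\sigma)$ and that $Y_\sigma^T A_r z = Y_\sigma^T v$. Since $[Y_\sigma,\,\hat Y_\sigma]$ is nonsingular and $Y_\sigma^T A_r Y_\sigma$ is nonsingular, these two conditions characterize the right-hand side of \eqref{eq:z} uniquely, so establishing them for our candidate $z$ finishes the proof.

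First I would make the kernel $\mbox{ker}(F_\sigma^T)$ explicit. Recall $F_\sigma=\bigl[\begin{smallmatrix} I & C_1^T\Upsilon \\ 0 & \hat Y_{C_2}^T C_2^T\Upsilon\end{smallmatrix}\bigr] = \bigl[\begin{smallmatrix} I & X_1 \\ 0 & \hat Y_{C_2}^T X_2\end{smallmatrix}\bigr]$, so a vector $[w_1^T,\,w_2^T]^T$ lies in $\mbox{ker}(F_\sigma^T)$ iff $w_1 + X_1^T$-type block vanishes; more precisely $F_\sigma^T[w_1;w_2]=0$ forces $w_1=0$ and $X_2^T\hat Y_{C_2} w_2 = 0$. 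Since $\hat Y_{C_2}^T X_2$ has full column rank (shown in Section~\ref{ssec:stability}), writing $\tilde X_2 := \hat Y_{C_2}^T X_2$ we get $\mbox{ker}(F_\sigma^T) = \{0\}\times\mbox{ker}(\tilde X_2^T)$. Consequently $Y_\sigma$ may be taken of the form $[0;\,Y_2]$ where the columns of $Y_2$ span $\mbox{ker}(\tilde X_2^T)$, and in particular $z=[0;z_2]$ automatically lies in $\mbox{im}(Y_\sigma)$ provided $z_2\in\mbox{im}(Y_2)=\mbox{ker}(\tilde X_2^T)$. That membership is exactly encoded by the second block row $X_2^T\hat Y_{C_2} z_2 = \tilde X_2^T z_2 = 0$ of the saddle-point system \eqref{eq:Piinf}, so the first defining property holds once \eqref{eq:Piinf} is satisfied.

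For the second property, I would compute $A_r z$ using the block form of $A_r$ after the coordinate change. From \eqref{eq:MQS3Dreglinmat} and the definitions of $F_\nu$, $M_\nu$, one has $A_r = -F_\nu M_\nu F_\nu^T = -\bigl[\begin{smallmatrix} C_1^T M_\nu C_1 & C_1^T M_\nu C_2\hat Y_{C_2} \\ \hat Y_{C_2}^T C_2^T M_\nu C_1 & \hat Y_{C_2}^T C_2^T M_\nu C_2 \hat Y_{C_2}\end{smallmatrix}\bigr]$, whose $(2,2)$ block is $-\hat Y_{C_2}^T K_{22}\hat Y_{C_2}$ by the identity $K_{22}=C_2^T M_\nu C_2$ from \eqref{eq:MQS3Dmat}. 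Applying $A_r$ to $z=[0;z_2]$ gives $A_r z = [\,-C_1^T M_\nu C_2\hat Y_{C_2} z_2\,;\,-\hat Y_{C_2}^T K_{22}\hat Y_{C_2} z_2\,]$. Now the first block row of \eqref{eq:Piinf} reads $-\hat Y_{C_2}^T K_{22}\hat Y_{C_2}z_2 + \hat Y_{C_2}^T X_2\hat z_2 = v_2$, i.e. the second block of $A_r z$ equals $v_2 - \hat Y_{C_2}^T X_2 \hat z_2 = v_2 - \tilde X_2\hat z_2$. Then $Y_\sigma^T A_r z = Y_2^T(\text{second block of }A_r z) = Y_2^T(v_2 - \tilde X_2\hat z_2) = Y_2^T v_2$, where the term $Y_2^T\tilde X_2\hat z_2$ drops out because $Y_2^T\tilde X_2 = (\tilde X_2^T Y_2)^T = 0$ by the choice of $Y_2$. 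On the other hand $Y_\sigma^T v = [0,\,Y_2^T][v_1;v_2] = Y_2^T v_2$, so $Y_\sigma^T A_r z = Y_\sigma^T v$ as required. This completes the verification, and the proof amounts to assembling these two checks; the only mild subtlety is confirming unique solvability of the saddle-point system \eqref{eq:Piinf}, which follows since $-\hat Y_{C_2}^T K_{22}\hat Y_{C_2}$ restricted to $\mbox{ker}(\tilde X_2^T)$ is nonsingular — equivalently $Y_\sigma^T A_r Y_\sigma$ is nonsingular — together with the full column rank of $\tilde X_2$, the standard condition for an invertible KKT matrix. I expect this solvability argument, rather than the algebra, to be the one point needing a careful sentence.
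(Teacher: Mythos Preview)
Your argument is correct. The characterization you use --- that $z$ in \eqref{eq:z} is the unique vector with $z\in\mbox{im}(Y_\sigma)$ and $Y_\sigma^T A_r z = Y_\sigma^T v$ --- together with the explicit identification $\ker(F_\sigma^T)=\{0\}\times\ker(\tilde X_2^T)$ gives a clean direct verification, and the saddle-point solvability follows exactly as you say.

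The paper takes a slightly different route: instead of computing $\ker(F_\sigma^T)$ explicitly, it first shows that \eqref{eq:z} is equivalent to an augmented system
\[
\begin{bmatrix} A_r & \hat Y_\sigma \\ \hat Y_\sigma^T & 0\end{bmatrix}
\begin{bmatrix} z \\ \hat z\end{bmatrix}
=\begin{bmatrix} v \\ 0\end{bmatrix}
\]
with the basis matrix $\hat Y_\sigma$ of $\mbox{im}(F_\sigma)$ playing the role of a Lagrange-multiplier block, and then expands this $4\times 4$ block system to read off $z_1=0$ and the reduced system \eqref{eq:Piinf}. Your approach is more direct and avoids the intermediate augmented system; the paper's approach has the small advantage that solvability of \eqref{eq:Piinf} comes out with a stronger justification (it notes that $\hat Y_{C_2}^T K_{22}\hat Y_{C_2}$ is itself positive definite, not merely nonsingular on $\ker(\tilde X_2^T)$), which is slightly cleaner than invoking KKT theory. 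Either argument is perfectly adequate here.
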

	
	\begin{proof}
		We first show that  $z=Y_{\sigma}(Y_{\sigma}^TA_rY_{\sigma})^{-1}Y_{\sigma}^Tv$ if and only if
		\begin{equation}\label{eq:Piinfprof1}
		\begin{bmatrix}
		A_r & \hat{Y}_{\sigma}\\\hat{Y}_{\sigma}^T &0 
		\end{bmatrix}\begin{bmatrix} z \\ \hat z\end{bmatrix}=\begin{bmatrix}v \\0\end{bmatrix},
		\end{equation}
		where $\hat{Y}_{\sigma}$ is as in \eqref{eq:hatYsigma}.
			Let $[z^T,\, \hat{z}^T]^T$ solves equation \eqref{eq:Piinfprof1}. 
		Then $\hat{Y}_\sigma^T z=0$ and, hence, \mbox{$z\in\mbox{ker}(\hat{Y}_\sigma^T)=\mbox{\rm im}(Y_\sigma)$}.
		This means that there exists a~vector $\hat{w}$ such that $z=Y_\sigma \hat{w}$. Inserting this vector into the first equation in \eqref{eq:Piinfprof1}, we obtain $A_rY_\sigma \hat{w} + \hat{Y}_\sigma \hat{z}=v$.
		Multiplying this equation from the left with $Y_\sigma^T$ and solving it for $\hat{w}$, we get
		$z=Y_\sigma^{}(Y_\sigma^TA_r^{} Y_\sigma^{})^{-1}Y_\sigma^T v$.	
		
		Conversely, for $z$ as in \eqref{eq:z} and
		$\hat{z}= (\hat{Y}_{\sigma}^T\hat{Y}_{\sigma}^{})^{-1}\hat{Y}_{\sigma}^T(v-A_r^{}z)$,
		we have $\hat{Y}_{\sigma}^T z=0$ and
		\begin{align*}
		A_r^{} z+\hat{Y}_{\sigma}^{}\hat{z} &= 
		A_r^{} z + \hat{Y}_{\sigma}^{}(\hat{Y}_{\sigma}^T\hat{Y}_{\sigma}^{})^{-1}\hat{Y}_{\sigma}^T(v-A_r^{}z)\\
		&=(I-\hat{Y}_{\sigma}^{}(\hat{Y}_{\sigma}^T\hat{Y}_{\sigma}^{})^{-1}\hat{Y}_{\sigma}^T)A_r^{}
		z+\hat{Y}_{\sigma}^{}(\hat{Y}_{\sigma}^T\hat{Y}_{\sigma}^{})^{-1}\hat{Y}_{\sigma}^T v.
		\end{align*}
		Using $\hat{Y}_{\sigma}^{}(\hat{Y}_{\sigma}^T\hat{Y}_{\sigma}^{})^{-1}\hat{Y}_{\sigma}^T+
		Y_{\sigma}^{}(Y_{\sigma}^TY_{\sigma}^{})^{-1}Y_{\sigma}^T=I$ twice, we obtain
		\begin{align*}
		A_r^{} z+\hat{Y}_{\sigma}^{}\hat{z}&=Y_{\sigma}^{}(Y_{\sigma}^TY_{\sigma}^{})^{-1}
		Y_{\sigma}^TA_r^{}z+
		\hat{Y}_{\sigma}^{}(\hat{Y}_{\sigma}^T\hat{Y}_{\sigma}^{})^{-1}\hat{Y}_{\sigma}^Tv\\
		&=Y_{\sigma}^{}(Y_{\sigma}^TY_{\sigma}^{})^{-1}Y_{\sigma}^TA_r^{}Y_{\sigma}^{}
		(Y_{\sigma}^TA_r^{}Y_{\sigma}^{})^{-1}Y_{\sigma}^Tv+\hat{Y}_{\sigma}^{}
		(\hat{Y}_{\sigma}^T\hat{Y}_{\sigma}^{})^{-1}\hat{Y}_{\sigma}^Tv = v.
		\end{align*}
		Thus, $[z^T,\, \hat{z}^T]^T$ satisfies equation \eqref{eq:Piinfprof1}.
		
		Equation \eqref{eq:Piinfprof1} can be written as
		\begin{equation}\label{eq:Piinfprof2}
		\begin{bmatrix}
		-K_{11} &  -K_{12}\hat{Y}_{C_2}&\enskip I &\enskip 0\\
		- \hat{Y}_{C_2}^TK_{21} & - \hat{Y}_{C_2}^TK_{22}\hat{Y}_{C_2}&\enskip 0 &\enskip  Z\\
		I & 0 &\enskip  0 &\enskip  0\\ 
		0& Z^T & \enskip 0 &\enskip  0\;
		\end{bmatrix}\begin{bmatrix} z_1 \\ z_2\\ z_3\\ z_4\end{bmatrix}=
		\begin{bmatrix} v_1\\v_2\\0\\0
		\end{bmatrix},
		\end{equation}
		with $z=[z_1^T,\, z_2^T]^T$, $\hat{z}=[z_3^T,\, z_4^T]^T$ and $v=[v_1^T,\, v_2^T]^T$.
		The third equation in \eqref{eq:Piinfprof2} yields $z_1=0$. Furthermore, multiplying the fourth equation in  \eqref{eq:Piinfprof2} from the left with $X_2^T\hat{Y}_{C_2}^{}\hat{Y}_{C_2}^TX_2^{}$ and introducing a~new variable $\hat{z}_2=(X_2^T\hat{Y}_{C_2}^{}\hat{Y}_{C_2}^TX_2^{})^{-1}z_4$, we obtain equation \eqref{eq:Piinf} which is uniquely solvable since 
		$\hat{Y}_{C_2}^TK_{22}\hat{Y}_{C_2}$ is symmetric, positive definite and $\hat{Y}_{C_2}^TX_2^{}$ has full column rank. Thus, $z =[0,\, z_2^T]^T$ with $z_2$ satisfying \eqref{eq:Piinf}.
	\end{proof}

We summarize the computation of $z=E_r^-A_r^{}v$ with $v=\mathit{\Pi}v$ in Algorithm~\ref{alg:invEA}.

\begin{algorithm}[ht]
	\caption{Computation of $E_r^-A_r^{}v$} \label{alg:invEA}
	\begin{algorithmic}[1]
		\REQUIRE 
		$M_{11}$, $K_{11}$,
		$K_{12}$, 
		$K_{21}$, 
		$K_{22}$,
		$X_1$,
		$X_2$, 
		$R$, 
		$\hat{Y}_{C_2}$,
				and $v=\mathit{\Pi}v=[v_1^T,\,v_2^T]^T$. 
			\ENSURE $z=E_r^-A_r^{}v$ with $E_r$ and $A_r$ as in \eqref{eq:MQS3Dreglinmat}.
		\STATE 	Compute $\displaystyle{\begin{bmatrix}\hat{v}_1 \\ \hat{v}_2\end{bmatrix}=
			\begin{bmatrix} -K_{11} v_1-K_{12}\hat{Y}_{C_2}v_2 \\ -\hat{Y}_{C_2}^TK_{21}^{}v_1^{}-\hat{Y}_{C_2}^TK_{22}^{}\hat{Y}_{C_2}^{} v_2^{}
			\end{bmatrix}}$.
		\STATE 	Compute $Z=\hat{Y}_{C_2}^TX_2^{}(X_2^T\hat{Y}_{C_2}^{}\hat{Y}_{C_2}^TX_2^{})^{-1}$.
		\STATE 	Compute $\hat{w}_2=Z^T\hat{v}_2$.
		\STATE 	Solve $M_{11} w_1=\hat{v}_1-X_1\hat{w}_2$ for $w_1$.
		\STATE 	Compute $w_2=-Z(X_1^Tw_1-R\hat{w}_2)$.
		\STATE  Solve $\displaystyle{\begin{bmatrix} -\hat{Y}_{C_2}^TK_{22}^{}\hat{Y}_{C_2}^{} & \hat{Y}_{C_2}^TX_2^{} \\ X_2^T\hat{Y}_{C_2}^{} & 0\end{bmatrix}
			\begin{bmatrix} z_2 \\ \hat{z}_2\end{bmatrix}=
			\begin{bmatrix}-\hat{Y}_{C_2}^TK_{21}^{}w_1^{}-\hat{Y}_{C_2}^TK_{22}^{}\hat{Y}_{C_2}^{} w_2^{} \\ 0\end{bmatrix}}$ for $z_2$.
		\STATE Compute $\displaystyle{z=\begin{bmatrix} w_1 \\ w_2-z_2\end{bmatrix}}$.
	\end{algorithmic}
\end{algorithm}

The major computational effort in the LR-ADI method \eqref{eq:LR-ADI} is the computation of 
\mbox{$(\tau_k E_r+A_r)^{-1}w$} for some vector $w$. 
If $\tau_k E_r+A_r$ remains sparse, we just solve the linear system $(\tau_k E_r+A_r)z=w$ of dimension $n_r$. 
If $\tau_k E_r+A_r$ gets
fill-in due to the multiplication with $\hat{Y}_{C_2}$, then we can use the following lemma to compute $z=(\tau_k E_r+A_r)^{-1}w$.

\begin{lemma}
	Let $E_r$ and $A_r$ be as in \eqref{eq:MQS3Dreglinmat}, $w=[w_1^T,\,w_2^T]^T\in\mathbb{R}^{n_r}$, and $\tau<0$. Then the vector $z=(\tau E_r+A_r)^{-1}w$ can be determined as 
	$$
	z=\begin{bmatrix} z_1 \\ (\hat{Y}_{C_2}^T\hat{Y}_{C_2}^{})^{-1}\hat{Y}_{C_2}^T z_2\end{bmatrix},
	$$ 
	where $z_1$ and $z_2$ satisfy the linear system
	\begin{equation}
	\label{eq:MQS3DADIeq}
	\begin{bmatrix}
	\tau M_{11}-K_{11} & -K_{12} & X_1 & 0\\
	-K_{21} & -K_{22} & X_2 & Y_{C_2}\\[.5mm]
	\tau X_1^T & \tau X_2^T & -R & 0\\[.5mm]
	0 & Y_{C_2}^T & 0 & 0
	\end{bmatrix}
	\begin{bmatrix}
	z_1\\z_2\\z_3\\z_4
	\end{bmatrix}=
	\begin{bmatrix}
	w_1 \\ \hat{Y}_{C_2}^{}(\hat{Y}_{C_2}^T\hat{Y}_{C_2}^{})^{-1}w_2\\0\\0
	\end{bmatrix}
	\end{equation}
	of dimension $n+m+k_2$.
\end{lemma}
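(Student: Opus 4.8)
The plan is to undo, one at a time, the two sources of fill-in in $\tau E_r+A_r$: the compression of the non-conducting block by $\hat{Y}_{C_2}$, which enters through $E_r=F_\sigma M_\sigma F_\sigma^T$ and $A_r=-F_\nu M_\nu F_\nu^T$, and the implicit Schur complement inside $F_\sigma$ that has eliminated the current variable. To this end I would introduce a lifted non-conducting state $z_2\in\mathbb{R}^{n_2}$, a current-like auxiliary $z_3\in\mathbb{R}^m$, and a $\ker(C_2)$-component $z_4\in\mathbb{R}^{k_2}$, and prove that the $n_r$-dimensional equation $(\tau E_r+A_r)z=w$ is equivalent to the larger but sparse $(n+m+k_2)$-dimensional system \eqref{eq:MQS3DADIeq}.

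First I would record the block identities. Writing $z=[z_1^T,\hat z^T]^T$ with $z_1\in\mathbb{R}^{n_1}$, $\hat z\in\mathbb{R}^{n_2-k_2}$ and setting $z_2:=\hat{Y}_{C_2}\hat z$ and $z_3:=\tau R^{-1}(X_1^Tz_1+X_2^Tz_2)$ (which is $\tau$ times the second block of $M_\sigma F_\sigma^Tz$), a direct computation with $F_\sigma$, $M_\sigma$, $F_\nu$ and $K_{ij}=C_i^TM_\nu C_j$ gives
\[
\tau E_r z=\begin{bmatrix}\tau M_{11}z_1+X_1z_3\\ \hat{Y}_{C_2}^TX_2z_3\end{bmatrix},\qquad
A_r z=-\begin{bmatrix}K_{11}z_1+K_{12}z_2\\ \hat{Y}_{C_2}^T(K_{21}z_1+K_{22}z_2)\end{bmatrix},
\]
where I use that the second block of $F_\sigma^Tz$ equals $X_1^Tz_1+X_2^T\hat{Y}_{C_2}\hat z=X_1^Tz_1+X_2^Tz_2$ and that $F_\nu^Tz=C_1z_1+C_2\hat{Y}_{C_2}\hat z=C_1z_1+C_2z_2$. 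I would also record the orthogonality facts used throughout: since the columns of $\hat{Y}_{C_2}$ span $\mbox{im}(C_2^T)$ and those of $Y_{C_2}$ span $\ker(C_2)=\mbox{im}(C_2^T)^\perp$, the matrix $[\hat{Y}_{C_2},Y_{C_2}]$ is nonsingular, $\hat{Y}_{C_2}^TY_{C_2}=0$, $\hat{Y}_{C_2}^T\hat{Y}_{C_2}$ is nonsingular, and $\ker(Y_{C_2}^T)=\mbox{im}(\hat{Y}_{C_2})$.

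Next I would prove the equivalence. If $(z_1,z_2,z_3,z_4)$ solves \eqref{eq:MQS3DADIeq}, then its fourth equation forces $z_2\in\mbox{im}(\hat{Y}_{C_2})$, so $z_2=\hat{Y}_{C_2}\hat z$ with $\hat z=(\hat{Y}_{C_2}^T\hat{Y}_{C_2})^{-1}\hat{Y}_{C_2}^Tz_2$, and its third equation is precisely $z_3=\tau R^{-1}(X_1^Tz_1+X_2^Tz_2)$. Substituting into the block identities, the first equation of \eqref{eq:MQS3DADIeq} becomes the first block row of $(\tau E_r+A_r)z=w$, while left-multiplying the second equation by $\hat{Y}_{C_2}^T$ and using $\hat{Y}_{C_2}^TY_{C_2}=0$ together with $\hat{Y}_{C_2}^T\hat{Y}_{C_2}(\hat{Y}_{C_2}^T\hat{Y}_{C_2})^{-1}=I$ yields the second block row; hence $z=[z_1^T,\hat z^T]^T$ solves $(\tau E_r+A_r)z=w$. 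Conversely, from a solution $z$ of $(\tau E_r+A_r)z=w$ I would set $z_2=\hat{Y}_{C_2}\hat z$, define $z_3$ by the same formula, and choose $z_4$ so that the second equation holds; this is possible because the block-row identity shows $X_2z_3-K_{21}z_1-K_{22}z_2-\hat{Y}_{C_2}(\hat{Y}_{C_2}^T\hat{Y}_{C_2})^{-1}w_2\in\ker(\hat{Y}_{C_2}^T)=\mbox{im}(Y_{C_2})$ and $Y_{C_2}$ has full column rank.

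Finally, to conclude that the stated formula indeed computes $(\tau E_r+A_r)^{-1}w$, I would use that $\tau E_r+A_r$ is invertible for $\tau<0$, since $v^T(\tau E_r+A_r)v=\tau\,\|M_\sigma^{1/2}F_\sigma^Tv\|^2-\|M_\nu^{1/2}F_\nu^Tv\|^2\le 0$ with equality only for $v\in\ker(F_\sigma^T)\cap\ker(F_\nu^T)=\ker(E_r)\cap\ker(A_r)=\{0\}$. Combining this with the equivalence above and the full column rank of $Y_{C_2}$, a homogeneous solution of \eqref{eq:MQS3DADIeq} yields a $z$ with $(\tau E_r+A_r)z=0$, hence $z_1=0$ and $\hat z=0$, so $z_2=0$, then $z_3=0$ from the third equation and $z_4=0$ from the second; thus \eqref{eq:MQS3DADIeq} is nonsingular, its unique solution has $z_1$ and $(\hat{Y}_{C_2}^T\hat{Y}_{C_2})^{-1}\hat{Y}_{C_2}^Tz_2$ equal to the two blocks of $(\tau E_r+A_r)^{-1}w$, and the dimension count $n_1+n_2+m+k_2=n+m+k_2$ finishes the argument. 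I expect the only genuine difficulty to be bookkeeping the lift $z_2=\hat{Y}_{C_2}\hat z$ and the compression $\hat{Y}_{C_2}^T(\cdot)$ consistently across both block rows, in particular checking that the auxiliary variable $z_4$ absorbs exactly the $\ker(C_2)$-component discarded by the projected second block row; the remaining steps are routine block algebra.
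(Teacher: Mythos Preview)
Your proof is correct and follows essentially the same route as the paper: use the fourth block equation to force $z_2\in\ker(Y_{C_2}^T)=\mbox{im}(\hat{Y}_{C_2})$, eliminate $z_3$ via the third block equation, and then compress the first two block rows back to $(\tau E_r+A_r)z=w$ by left-multiplying the second row with $\hat{Y}_{C_2}^T$. You actually supply more than the paper does: the paper only argues the forward direction (big system $\Rightarrow$ small system) and merely asserts nonsingularity of the augmented matrix, whereas you establish the converse direction and give a clean proof of nonsingularity via the negative definiteness of $\tau E_r+A_r$ together with the equivalence.
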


\begin{proof}
	First, note that due to the choice of $Y_{C_2}$ the coefficient matrix in system~\eqref{eq:MQS3DADIeq} is nonsingular. This system can be written as
	\begin{subequations}
		\begin{alignat}{8}
		(\tau M_{11}-K_{11})z_1 && -K_{12}z_2 && +X_1z_3 && && &= w_1, \label{eq:MQS3DADI1}\\
		-K_{21}z_1 && -K_{22}z_2 && +X_2z_3 && +Y_{C_2}z_4 && &=\hat{Y}_{C_2}^T(\hat{Y}_{C_2}^T\hat{Y}_{C_2})^{-1}w_2,\label{eq:MQS3DADI2}\\
		\tau X_1^Tz_1 &&+\tau X_2^Tz_2 && -Rz_3 && && &=0,\label{eq:MQS3DADI3}\\
		&&Y_{C_2}^Tz_2 && && && &=0.\label{eq:MQS3DADI4}
		\end{alignat}
	\end{subequations}
	It follows from \eqref{eq:MQS3DADI4} that $z_2\in \ker(Y_{C_2}^T)=\mbox{\rm im}(\hat{Y}_{C_2})$. Then there exists $\hat z_2$ such that $z_2=\hat{Y}_{C_2}\hat z_2$. Since $\hat{Y}_{C_2}$ has full column rank,
	it holds 
		\begin{equation}\label{eq:MQS3DADI5}
	\hat z_2=(\hat{Y}_{C_2}^T\hat{Y}_{C_2}^{})^{-1}\hat{Y}_{C_2}^Tz_2.
	\end{equation}
	Further, from equation \eqref{eq:MQS3DADI3} we obtain $z_3=\tau R^{-1}X_1^T z_1 +\tau R^{-1}X_2^T z_2$. 
	Substituting $z_2$ and $z_3$ into \eqref{eq:MQS3DADI1} and \eqref{eq:MQS3DADI2} and multiplying equation \eqref{eq:MQS3DADI2} from the left with $\hat{Y}_{C_2}^T$ yields
	\begin{equation*}
	(\tau E_r+A_r)\begin{bmatrix}
	z_1\\\hat z_2
	\end{bmatrix}=\begin{bmatrix} w_1\\w_2\end{bmatrix}.
	\end{equation*}
	This equation together with \eqref{eq:MQS3DADI5} implies that 
	\begin{equation*}
	\begin{bmatrix}
	z_1\\(\hat{Y}_{C_2}^T\hat{Y}_{C_2}^{})^{-1}\hat{Y}_{C_2}^T z_2
	\end{bmatrix}=(\tau E_r+A_r)^{-1}\begin{bmatrix}
	w_1\\w_2\end{bmatrix}
	\end{equation*}
	that  completes the proof.
\end{proof}

Finally, we discuss the computation of the basis matrices $Y_{C_2}$ and $\hat{Y}_{C_2}$ 
required in Algorithm~\ref{alg:invEA} and the LR-ADI iteration. 
To this end, we introduce a~{\em discrete gradient matrix} $G_0\in\mathbb{R}^{n_e\times n_n}$ whose entries 
are defined as
\begin{equation*}
(G_0)_{ij}=\begin{cases}
  \phantom{-}1, & \text { if edge $i$ leaves node $j$},\\
-1, & \text { if edge $i$ enters node $j$},\\
\phantom{-}0, &\text{ else.}
\end{cases}
\end{equation*}
Note that the discrete curl and gradient matrices $C$ and $G_0$ satisfy 
\mbox{$\mbox{rank}(C)=n_e-n_n+1$}, $\mbox{rank}(G_0)=n_n-1$ and $CG_0=0$, see \cite{Boss1998}. 
Then by removing one column of $G_0$, we get the reduced discrete gradient matrix $G$  
whose columns form a~basis of $\mbox{ker}(C)$. The matrices $C$ and $G^T$ can be considered as the loop and
 incidence matrices, respectively, of a~directed graph
whose nodes and branches correspond to the nodes and edges of the triangulation 
$\mathcal{T}_h(\Omega)$, 
see \cite{Deo74}. Then the basis matrices $Y_{C_2}$ and $\hat{Y}_{C_2}$ can be determined by using the graph-theoretic algorithms as presented in \cite{Ipac2013}.

 Let the reduced gradient matrix $G=\begin{bmatrix} G_1^T & G_2^T\end{bmatrix}^T$
be partitioned into blocks according to $C=\begin{bmatrix}C_1, \; C_2\end{bmatrix}$.
It follows from \cite[Theorem 9]{Ipac2013} that
\begin{equation*}
 \mbox{ker}(C_2)=\mbox{im}(G_2 Z_1),
\end{equation*}
where the columns of the matrix $Z_1$ form a~basis of $\ker(G_1)$. Then $\hat{Y}_{C_2}$ can be determined as $\hat{Y}_{C_2}=\texttt{kernelAk}(Z_1^TG_2^T)$ with the function $\texttt{kernelAk}$ from \cite[Section 4.2]{Ipac2013}, where the basis $Z_1$ is computed by applying the function $\texttt{kernelAT}$ from \cite[Section 3]{Ipac2013} to $G_1^T$.

\section{Numerical Results}

In this section,  we present some results of numerical experiments demonstrating the balanced truncation model reduction method for 3D linear MQS systems. 
For the FEM discretization with N\'ed\'elec elements, we used the 3D tetrahedral mesh generator NETGEN\footnote{https://sourceforge.net/projects/netgen-mesher/} and 
the MATLAB toolbox\footnote{http://www.mathworks.com/matlabcentral/fileexchange/46635} from  \cite{AnjaVald2015} for assembling the system matrices. 
All computations were done with MATLAB~R2018a. 

As a test model, we consider a~coil wound round  a~conducting tube surrounded by air. Such a~model was studied in \cite{NicST14} in the context of optimal control problems. A~bounded domain  
$$
\Omega = (-c_1, c_1)\times (-c_2, c_2)\times (-c_3, c_3)\subset\mathbb{R}^3 
$$
consists of the conducting domain $\Omega_1 = \Omega_{\rm iron}$ of the iron tube 
and the non-conducting domain $\Omega_2 = \Omega_{\rm coil}\cup\Omega_{\rm air}$,
where
$$
\arraycolsep=2pt
\begin{array}{rcl}
\Omega_{\rm iron} & = & \{\xi\in\mathbb{R}^3\enskip :\enskip 0<r_1<\xi_1^2+\xi_2^2<r_2,\enskip  z_1<\xi_3<z_2\;\},\\[2mm]
\Omega_{\rm coil} & = & \{\xi\in\mathbb{R}^3\enskip :\enskip 0<r_3<\xi_1^2+\xi_2^2<r_4,\enskip  z_3<\xi_3<z_4\;\}
\end{array}
$$
with $r_1<r_2<r_3<r_4$ and $z_1<z_3<z_4<z_2$, see Fig.~\ref{fig:tube}(a). The dimensions, geometry and material parameters are given in Fig.~\ref{fig:tube}(b). 
The divergence free winding function $\chi:\Omega\to\mathbb{R}^3$ is defined by
$$
\chi(\xi)=\left\{ \begin{array}{ll} \displaystyle{\frac{N_c}{S_c\sqrt{\xi_1^2+\xi_2^2}}}\begin{bmatrix}-\xi_2 \\ \enskip\; \xi_1 \\ \enskip\;0\end{bmatrix}, & \quad\xi\in\Omega_{\rm coil}, \\[3mm] \qquad 0, & \quad\xi\in\Omega\setminus\Omega_{\rm coil}, \end{array}\right.
$$
where $N_c$ is the number of coil turns and $S_c$ is the cross section area of the coil.

\begin{figure}[t]
\begin{minipage}{0.4\textwidth}
\framebox{\begin{minipage}{.93\linewidth}\includegraphics[width=\linewidth]{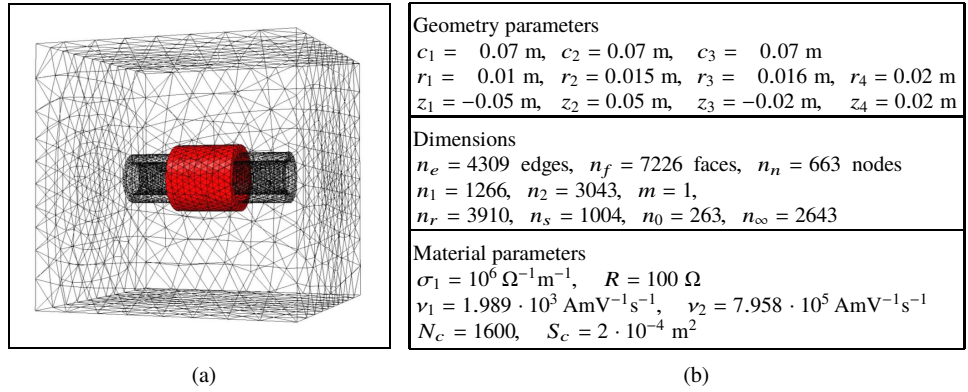}
\end{minipage}}
\end{minipage}
\scalebox{.96}{\begin{minipage}{0.6\textwidth}
	\centering	
	\begin{tabular}{|l|}
		\hline 
		\\[-2mm]
		Geometry parameters \\
		$\begin{array}{llll} 
		c_1=\enskip\; 0.07\; {\rm m}, &\; c_2=0.07\; {\rm m}, &\; c_3=\enskip\; 0.07\; {\rm m} &\\
		r_1=\enskip\; 0.01\; {\rm m}, &\; r_2=0.015\; {\rm m}, &\; r_3=\enskip\; 0.016\; {\rm m},      &\; r_4=0.02\; {\rm m} \\
		z_1=-0.05\; {\rm m},   &\; z_2=0.05\; {\rm m},   &\; z_3=-0.02\; {\rm m},      &\; z_4=0.02\; {\rm m}
		\end{array}$ \\[1mm] \hline
		\\[-2mm] 
		Dimensions \\
		$\begin{array}{l}
		n_e = 4309\enskip \mbox{edges}, \enskip n_f = 7226\enskip \mbox{faces}, \enskip n_n = 663\enskip \mbox{nodes}  \\
		n_1=1266, \enskip n_2=3043, \enskip  m=1, \\
		n_r=3910, \enskip n_s=1004, \enskip n_0=263, \enskip n_\infty=2643 \\
		\end{array}$ \\[1mm] \hline
		\\[-2mm] 
		Material parameters \\
		$\begin{array}{l}
		\sigma_1=10^6\,\Omega^{-1}{\rm m}^{-1}, \quad 
		R=
		100\;\Omega\\
			\nu_1=1.989\cdot 10^3\,{\rm A}{\rm m}{\rm V}^{-1}{\rm s}^{-1}, \quad
			\nu_2=7.958\cdot 10^5\,{\rm A}{\rm m}{\rm V}^{-1}{\rm s}^{-1}\\
		N_c=1600,\quad  S_c=2\cdot 10^{-4}\; {\rm m}^2\\[.4mm]
		\end{array}$ \\[.8mm] \hline 
	\end{tabular}\\[.2mm]
\end{minipage} 
}\\[2mm]
\begin{minipage}{0.4\linewidth}\centering (a)\end{minipage}  \hfill
\begin{minipage}{.6\linewidth}\centering (b)\end{minipage} 
\caption{Coil-tube model: (a) geometry; (b) dimensions and model parameters.}
\label{fig:tube}
\end{figure}

\begin{figure}[h]
	\begin{minipage}{0.5\textwidth}
	\includegraphics[width=1.05\linewidth]{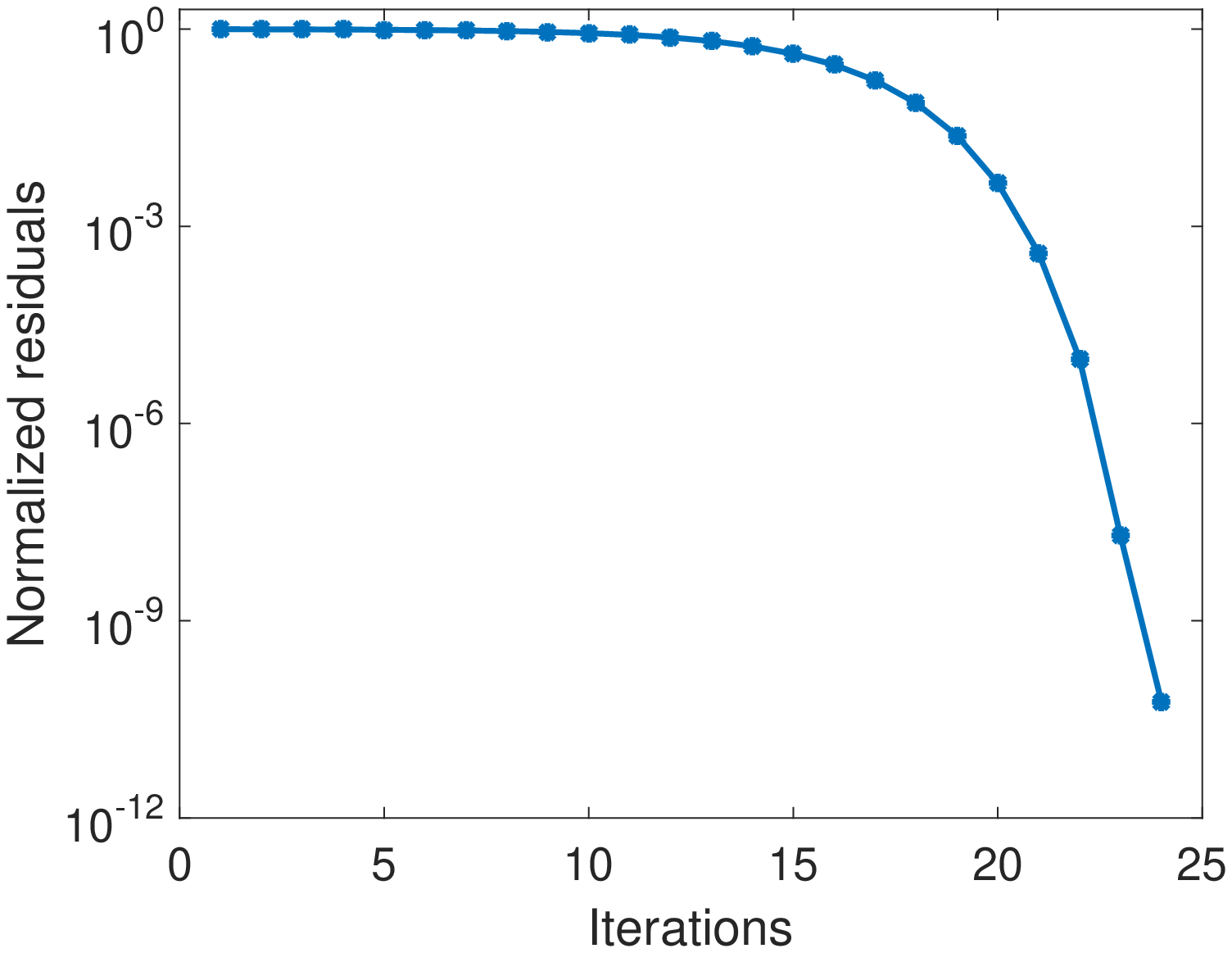}
\end{minipage}
\begin{minipage}{0.5\textwidth} 
	\includegraphics[width=1.05\linewidth]{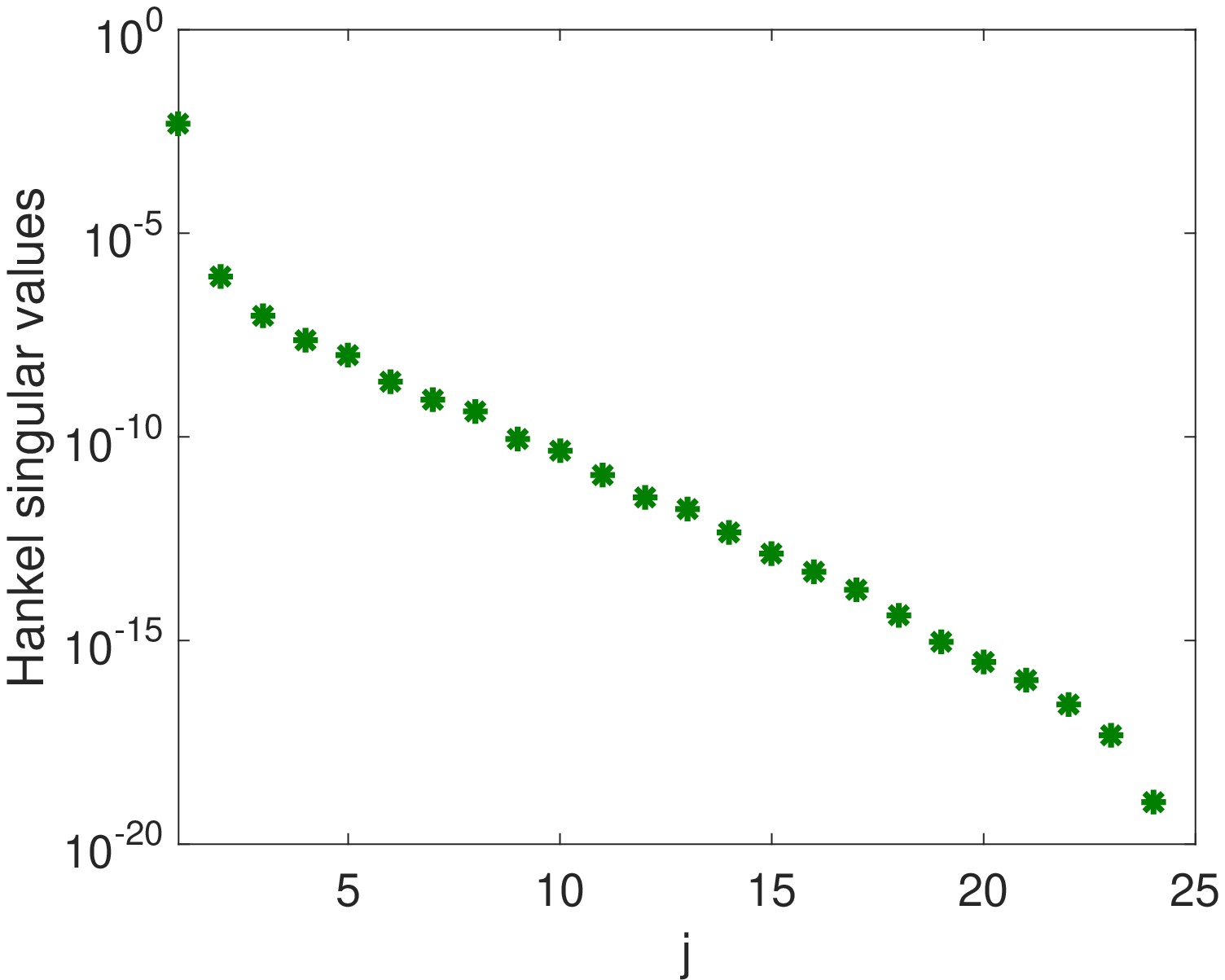}
\end{minipage} \\[2mm]
\begin{minipage}{.5\linewidth}\centering (a)\end{minipage}  \hfill
\begin{minipage}{.5\linewidth}\centering (b)\end{minipage} 
\caption{(a) Convergence history for the LR-ADI method; (b) Hankel singular values.}	
	\label{fig:adi}
\end{figure}

\begin{figure}[ht]
	\begin{minipage}{0.5\textwidth}
		\includegraphics[width=1.05\linewidth]{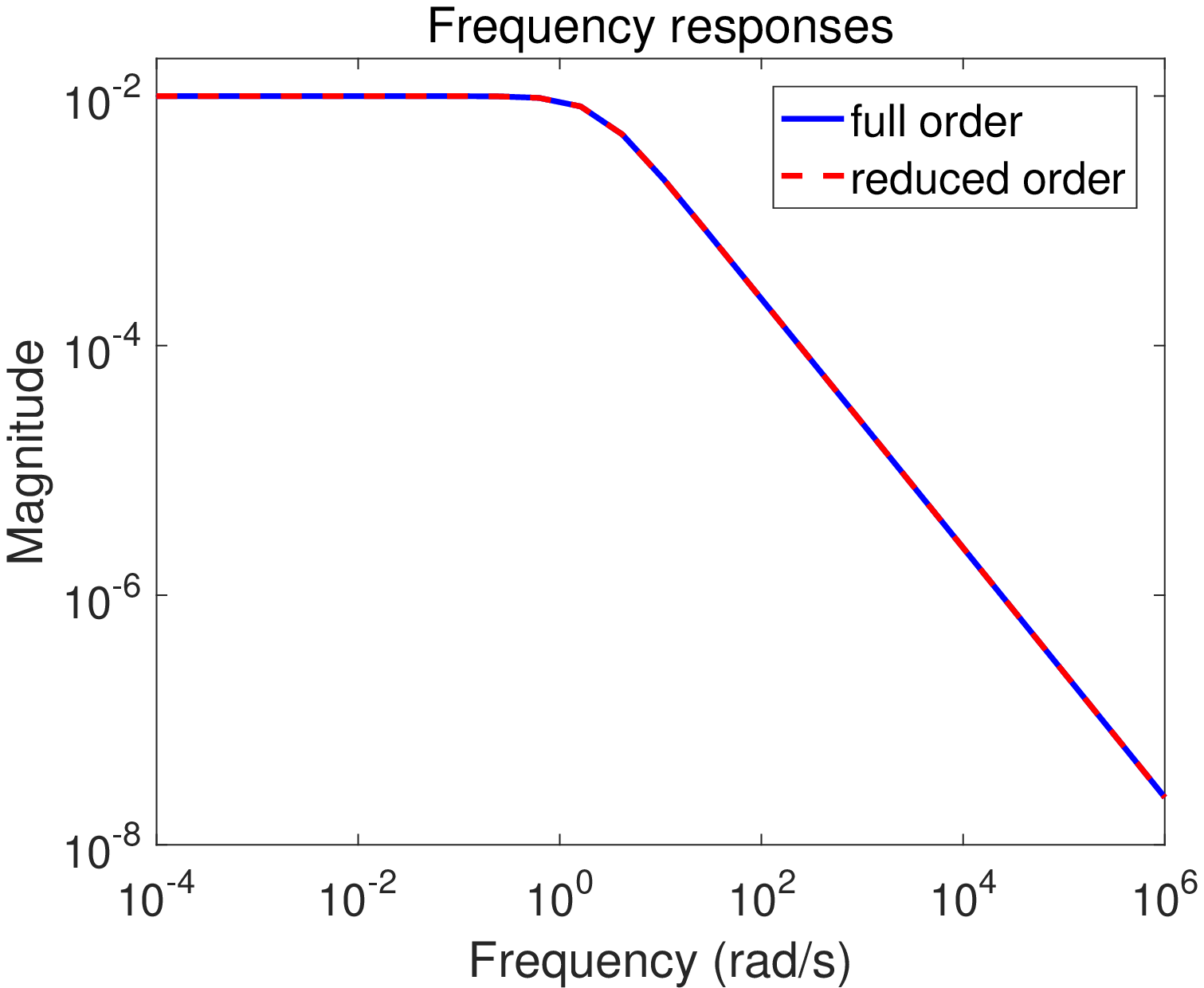}
	\end{minipage} 
\begin{minipage}{0.5\textwidth}
	\includegraphics[width=1.05\linewidth]{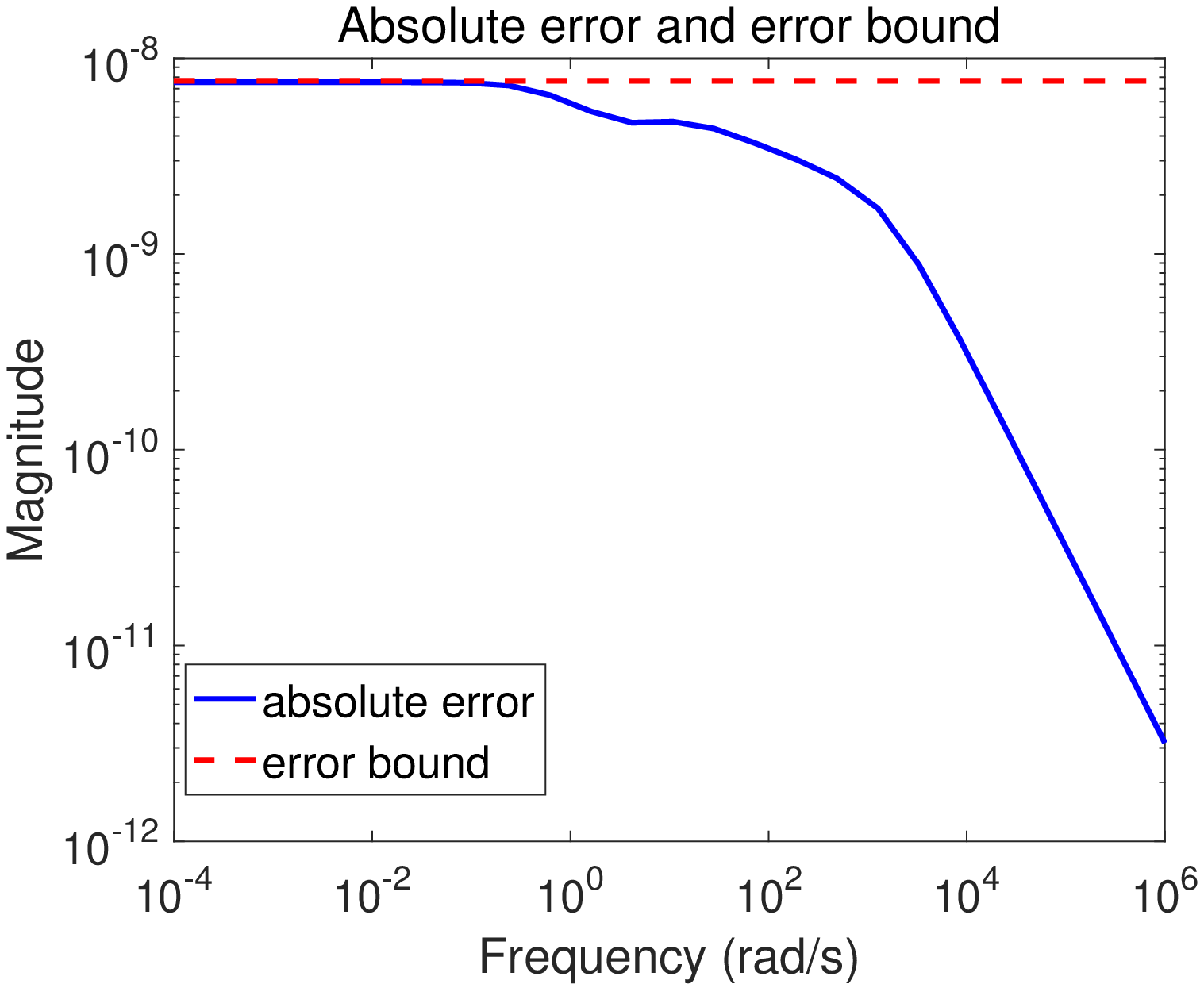}
\end{minipage} 
	\\[2mm]
	\begin{minipage}{.5\linewidth}\centering (a)\end{minipage}  \hfill
	\begin{minipage}{.5\linewidth}\centering (b)\end{minipage} 
	\caption{(a) Frequency responses of the full-order and reduced-order systems; 
		(b) Absolute error and error bound. }	
	\label{fig:frresp}
\end{figure}

\begin{figure}[ht]
	\begin{minipage}{0.5\textwidth}
		\includegraphics[width=1.05\linewidth]{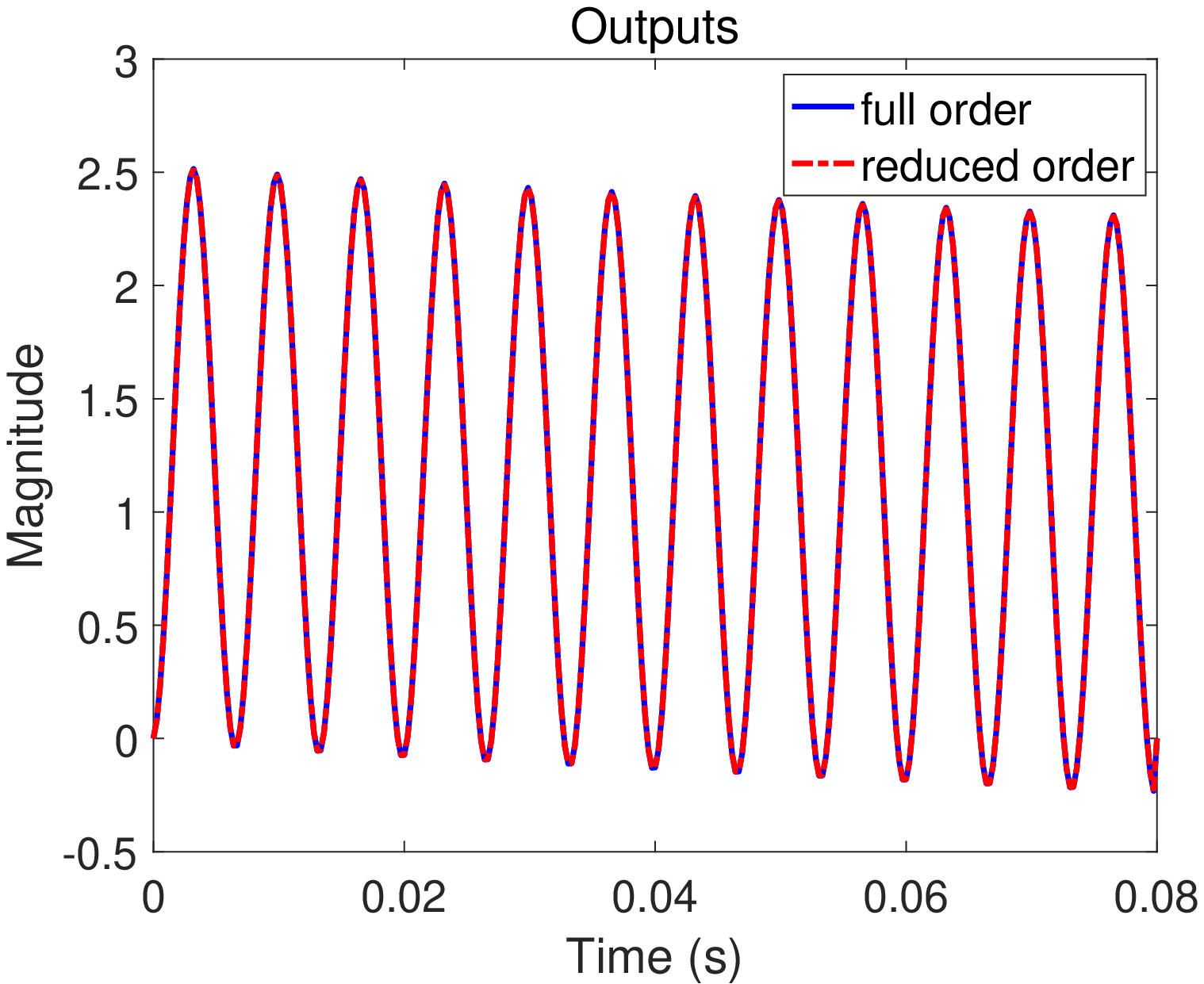}
	\end{minipage} 
    \begin{minipage}{0.5\textwidth}
	    \includegraphics[width=1.05\linewidth]{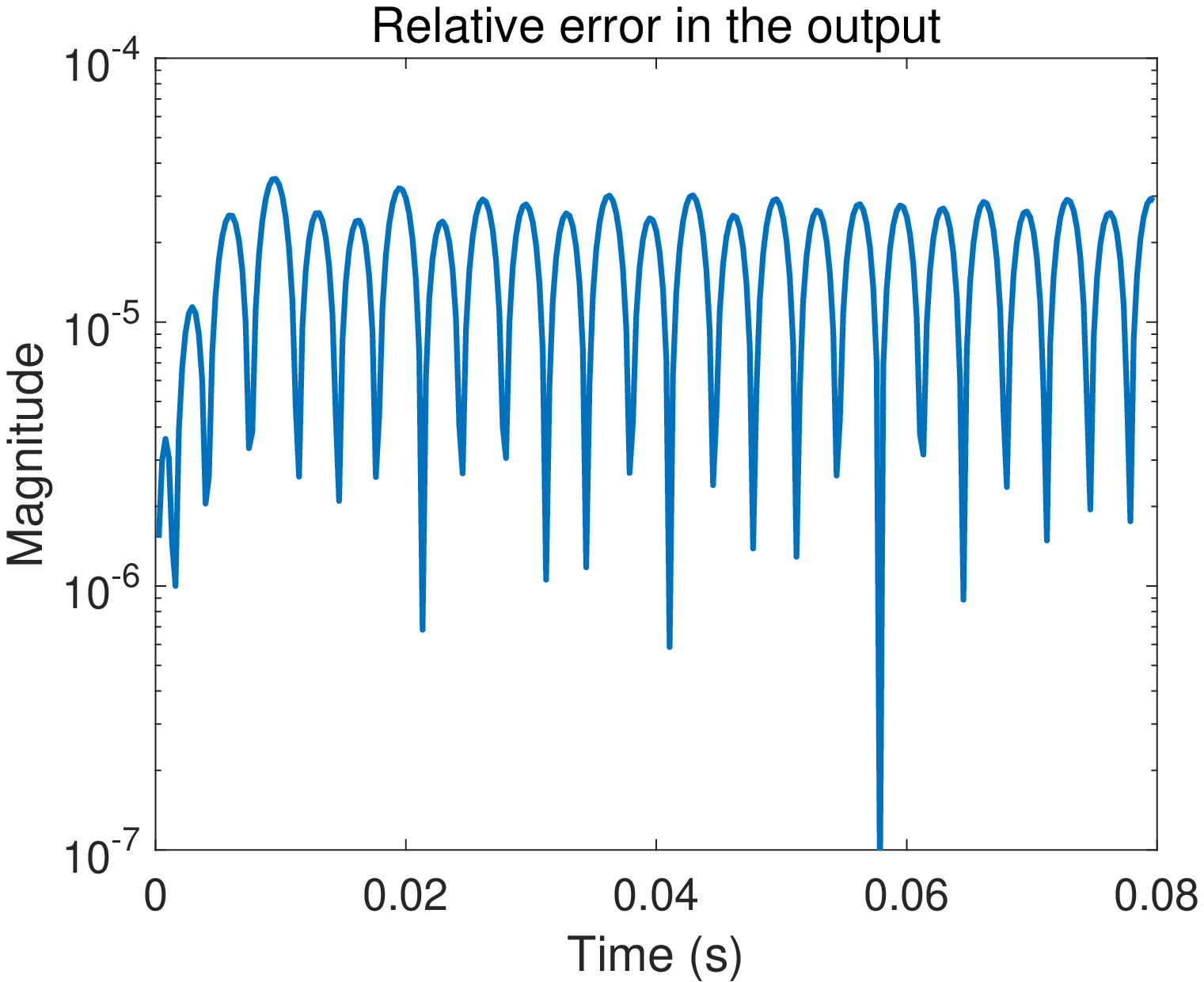}
    \end{minipage} 
	\\[2mm]
	\begin{minipage}{.5\linewidth}\centering (a)\end{minipage}  \hfill
	\begin{minipage}{.5\linewidth}\centering (b)\end{minipage} 
	\caption{(a) Outputs of the full-order and reduced-order systems; 
		(b) Relative error in the output. }	
	\label{fig:time}
\end{figure}

The controllability Gramian was approximated by a low-rank matrix \mbox{$G_c\approx Z_{n_c}^{}Z_{n_c}^T$} with $Z_{n_c}\in\mathbb{R}^{n_r\times\, n_c}$ with $n_c=24$. 
The normalized residual norm 
$$
\frac{\|E_r^{}Z_k^{}Z_k^TA_r^{}+A_r^{}Z_k^{}Z_k^TE_r^{}+B_r^{}B_r^T\|_F}{\|B_r^{}B_r^T\|_F} = \frac{\|R_k^{}R_k^T\|_F}{\|B_r^{}B_r^T\|_F}=\frac{\|R_k^TR_k^{}\|_F}{\|B_r^TB_r^{}\|_F}
$$ 
for the LR-ADI iteration \eqref{eq:LR-ADI} is presented in Fig.~\ref{fig:adi}(a). Fig.~\ref{fig:adi}(b) shows the Hankel singular values $\lambda_1,\ldots,\lambda_{n_c}$. We approximate the regularized MQS system \eqref{eq:MQS3Dreglin}, \eqref{eq:MQS3Dreglinout}
of dimension $n_r=3910$ by a~reduced model of dimension $\ell=5$. In Fig.~\ref{fig:frresp}(a), we present
the absolute values of the frequency responses $|H_r(i\omega)|$ and $|\tilde{H}_r(i\omega)|$ of the full and reduced-order models for the frequency range $\omega\in[10^{-4}, 10^6]$.
The absolute error $|H_r(i\omega)-\tilde{H}_r(i\omega)|$ and the error bound computed as
$$
2\bigl(\lambda_{\ell+1}+\ldots+\lambda_{n_c-1}+(n_s-\ell+1)\lambda_{n_c}\bigr) = 7.6714\cdot 10^{-9}
$$
are given in Fig.~\ref{fig:frresp}(b). Furthermore, using \eqref{eq:error} we compute the error 
$$
\|H_r-\tilde{H}_r\|_{\mathcal{H}_\infty}=7.5385\cdot 10^{-9}
$$
showing that the error bound is very tight.

In Fig.~\ref{fig:time}(a), we present the outputs $y(t)$ and $\tilde{y}(t)$ of the full and reduced-order systems on the time interval $[0,0.08]s$ computed for the input $u(t)=5\cdot 10^4 \sin(300\pi t)$
and zero initial condition using the implicit Euler method with $300$ time steps. The relative error 
$$
\frac{|y(t)-\tilde{y}(t)|}{\max\limits_{t\in[0,0.08]}|y(t)|}
$$ is given in 
Fig.~\ref{fig:time}(b). One can see that the reduced-order model approximates well the original system in both time and frequency domain.

\bigskip\noindent
{\bf Acknowledgment:} The authors would like to thank Hanko Ipach for providing the MATLAB functions for 
computing the kernels and ranges of incidence  matrices.

\bibliographystyle{spmpsci}

\end{document}